\DeclareMathSymbol{A}{\mathalpha}{operators}{`A}
\DeclareMathSymbol{B}{\mathalpha}{operators}{`B}
\DeclareMathSymbol{C}{\mathalpha}{operators}{`C}
\DeclareMathSymbol{D}{\mathalpha}{operators}{`D}
\DeclareMathSymbol{E}{\mathalpha}{operators}{`E}
\DeclareMathSymbol{F}{\mathalpha}{operators}{`F}
\DeclareMathSymbol{G}{\mathalpha}{operators}{`G}
\DeclareMathSymbol{H}{\mathalpha}{operators}{`H}
\DeclareMathSymbol{I}{\mathalpha}{operators}{`I}
\DeclareMathSymbol{J}{\mathalpha}{operators}{`J}
\DeclareMathSymbol{K}{\mathalpha}{operators}{`K}
\DeclareMathSymbol{L}{\mathalpha}{operators}{`L}
\DeclareMathSymbol{M}{\mathalpha}{operators}{`M}
\DeclareMathSymbol{N}{\mathalpha}{operators}{`N}
\DeclareMathSymbol{O}{\mathalpha}{operators}{`O}
\DeclareMathSymbol{P}{\mathalpha}{operators}{`P}
\DeclareMathSymbol{Q}{\mathalpha}{operators}{`Q}
\DeclareMathSymbol{R}{\mathalpha}{operators}{`R}
\DeclareMathSymbol{S}{\mathalpha}{operators}{`S}
\DeclareMathSymbol{T}{\mathalpha}{operators}{`T}
\DeclareMathSymbol{U}{\mathalpha}{operators}{`U}
\DeclareMathSymbol{V}{\mathalpha}{operators}{`V}
\DeclareMathSymbol{W}{\mathalpha}{operators}{`W}
\DeclareMathSymbol{X}{\mathalpha}{operators}{`X}
\DeclareMathSymbol{Y}{\mathalpha}{operators}{`Y}
\DeclareMathSymbol{Z}{\mathalpha}{operators}{`Z}
\newcommand{\integers}{\mathbf Z}
\newcommand{\reals}{\mathbf R}
\newcommand{\complexes}{\mathbf C}
\newcommand{\Cat}{\textnormal{\textsf{Cat}}}
\newcommand{\Spc}{\textnormal{\textsf{Spc}}}
\newcommand{\Set}{\textnormal{\textsf{Set}}}
\newcommand{\Sptr}{\textnormal{\textsf{Sptr}}}
\newcommand{\Alg}{\textnormal{\textsf{Alg}}}
\newcommand{\CAlg}{\textnormal{\textsf{CAlg}}}
\newcommand{\Sch}{\textnormal{\textsf{Sch}}}
\newcommand{\PStk}{\textnormal{\textsf{PStk}}}
\newcommand{\Fin}{\textnormal{\textsf{Fin}}}
\newcommand{\Rep}{\mathrm{Rep}}
\newcommand{\Top}{\textnormal{\textsf{Top}}}
\newcommand{\Shv}{\textnormal{\textsf{Shv}}}
\newcommand{\DGCat}{\textnormal{\textsf{DGCat}}}
\newcommand{\Groth}{\textnormal{\textsf{Groth}}}
\newcommand{\sep}{\mathrm{sep}}
\newcommand{\coBar}{\operatorname{coBar}}
\newcommand{\Mfd}{\textnormal{\textsf{Mfd}}}
\newcommand{\Disk}{\textnormal{\textsf{Disk}}}
\newcommand{\PrL}{\textnormal{\textsf{Pr}}^L}
\newcommand{\Vect}{\textnormal{\textsf{Vect}}}
\newcommand{\Fun}{\mathrm{Fun}}
\newcommand{\Mod}{\textnormal{-\textsf{mod}}}
\newcommand{\Emb}{\mathrm{Emb}}
\newcommand{\ev}{\mathrm{ev}}
\newcommand{\opposite}{\textnormal{op}}
\newcommand{\Maps}{\operatorname{Maps}}
\newcommand{\deloop}{\textnormal{\textsf{B}}}
\newcommand{\topology}{\mathrm{top}}
\newcommand{\ft}{\mathrm{ft}}
\DeclareMathOperator*\colim{colim}
\newcommand{\Sym}{\operatorname{Sym}}
\newcommand{\Bun}{\mathrm{Bun}}
\newcommand{\LS}{\mathrm{LS}}
\newcommand{\glob}{\mathrm{glob}}
\newcommand{\loc}{\mathrm{loc}}
\newcommand{\Sing}{\operatorname{Sing}}
\newcommand{\Lie}{\operatorname{Lie}}
\newcommand{\Level}{\mathrm{Level}}
\newcommand{\Quad}{\mathrm{Quad}}
\newcommand{\SO}{\mathrm{SO}}
\newcommand{\GL}{\mathrm{GL}}
\renewcommand{\@secnumfont}{\bfseries}
\newtheorem{thm}[subsubsection]{Theorem}
\newtheorem*{thm*}{Theorem}
\newtheorem{prop}[subsubsection]{Proposition}
\newtheorem{lem}[subsubsection]{Lemma}
\newtheorem{cor}[subsubsection]{Corollary}
\theoremstyle{definition}
\newtheorem{rem}[subsubsection]{Remark}
\numberwithin{equation}{section}
\newtheoremstyle{void}
    {}{}{}{}
    {\bfseries}{.}{ }
    {\thmname{#1}\thmnumber{#2}\thmnote{ {\mdseries \textit{#3}}}}
\theoremstyle{void}
\newtheorem{void}[subsubsection]{}
\title{The quantum torus as an $\mathbb E_M$-category}
\author{Lin Chen \and Yifei Zhao}
\date{\today}
\begin{document}

\begin{abstract}
Given an oriented $2$-manifold $M$, a locally constant sheaf of lattices $\Lambda$ over $M$, and a pointed morphism $q : \textnormal{\textsf B}^2\Lambda \rightarrow \textnormal{\textsf B}^4\complexes^{\times}$, we define an $\mathbb E_M$-category $\mathrm{Rep}_q(\check T)$ which we call the ``quantum torus" at level $q$. We explain why this terminology is deserved and calculate the factorization homology of $\mathrm{Rep}_q(\check T)$. When $M$ arises from a global complex curve, we confirm (a version of) a conjecture of Ben-Zvi and Nadler for tori.
\end{abstract}

\maketitle

\setcounter{tocdepth}{2}
\tableofcontents


\section*{Introduction}

Given a reductive group $G$ and a ``level" $q\in\complexes^{\times}$, one obtains the category $\Rep_q(\check G)$ of representations of the quantum group. It can be regarded as a braided monoidal deformation of the category $\Rep(\check G)$ of representations of the Langlands dual group $\check G$.

Morever, the category $\Rep_q(\check G)$ admits a ribbon structure, which allows one to ``spread" $\Rep_q(\check G)$ onto an oriented $2$-manifold $M$, thus defining an $\mathbb E_M$-category. It is then possible to extract a global invariant
$$
\int_M \Rep_q(\check G),
$$
called its ``factorization homology", which plays a prominent role in the Betti quantum geometric Langlands program. We refer the reader to \cite{MR3847209, MR3821166} for details.

The consideration above makes us suspect that $\Rep_q(\check G)$ starts life naturally as an $\mathbb E_M$-category. Indeed, in the context of the quantum geometric Langlands program, the level $q$ is not expected to be a complex number in general, but a (suitably categorified) degree-$4$ reduced cohomology class of $\deloop G$. Unless $G$ is simply connected, a level of this kind can vary along $M$, so we do not expect $\Rep_q(\check G)$ to come from a single ribbon category.

In this note, we confirm this suspicion for quantum tori: We give a direct definition of $\Rep_q(\check T)$ as an $\mathbb E_M$-category, which accommodates the general notion of levels as well as nonsplit tori. From a classical perspective, one can say that this note explains how quantum tori behave in family.

\subsection*{Contents of this note}

Our definition of the ``quantum torus" $\Rep_q(\check T)$ takes as input a triple $(M, \Lambda, q)$, where
\begin{enumerate}
	\item $M$ is an oriented $2$-manifold (with underlying $\infty$-groupoid $\Sing M$);
	\item $\Lambda$ is a functor from $\Sing M$ to the category of finite free $\integers$-modules;
	\item $q : \deloop^2\Lambda \rightarrow \deloop^4\complexes^{\times}$ is a morphism in the $\infty$-category $\Fun(\Sing M, \Spc_*)$, where $\Spc_*$ denotes the $\infty$-category of pointed $\infty$-groupoids.
\end{enumerate}

Given the triple $(M, \Lambda, q)$, we shall define $\Rep_q(\check T)$ as an $\mathbb E_M$-algebra in the $\infty$-category $\DGCat$ of DG categories in \S\ref{sec-quantum-torus-definition}.

Our definition is conceptually simple, but not quite explicit. To argue that we have given a reasonable definition, we shall show that $\Rep_q(\check T)$ has the expected behavior of a quantum torus: It is completely determined by its heart $\Rep_q(\check T)^{\heartsuit}$, which is a family of ``twisted" braided monoidal categories over $M$ whose local invariants can be expressed explicitly in terms of those of $q$ (\emph{cf.}~Proposition \ref{void-ribbon-structure-context}, Proposition \ref{prop-ribbon-twist}).

As for global invariants, we shall compute the factorization homology of $\Rep_q(\check T)$. In Theorem \ref{thm-factorization-homology-quantum-torus}, we shall construct a canonical equivalence in $\DGCat$:
\begin{equation}
\label{eq-factorization-homology-intro}
\int_M \Rep_q(\check T) \simeq \LS_q(\Gamma_c(M, \deloop^2\Lambda)),
\end{equation}
where the right-hand-side denotes the DG category of ``$q$-twisted" local systems over the $\infty$-groupoid $\Gamma_c(M, \deloop^2\Lambda)$ of compactly supported sections.

In fact, our definition of $\Rep_q(\check T)$ turns \eqref{eq-factorization-homology-intro} into an immediate corollary of nonabelian Poincar\'e duality, due to Salvatore, Segal, and Lurie (\emph{cf.}~\cite{MR1851264, MR2681691, lurie2017higher}).

When $M$ is the underlying oriented $2$-manifold of a global complex curve $X$ and $\Lambda$ is defined by the (locally constant) sheaf of cocharacters of an $X$-torus $T$, the equivalence \eqref{eq-factorization-homology-intro} implies a version of \cite[Conjecture 4.27]{MR3821166}, the quantum Betti geometric Langlands conjecture for tori (\emph{cf.}~Corollary \ref{cor-ben-zvi-nadler}).

\subsection*{Acknowledgements}
We thank Dennis Gaitsgory, Sam Raskin, and Nick Rozenblyum for teaching us about the geometric Langlands program.

In addition, Y.Z.~thanks Thomas Nikolaus, Phil P\"utzst\"uck, and Maxim Ramzi for patiently answering his topology questions, and JiWoong Park for helpful conversations.

\section{Preparation}
\label{sec-preparation}

In this section, we recall the notions of $\mathbb E_M$-algebras and factorization homology, and gather all of their properties that we shall use later. These properties are comprehensively established in \cite[\S5.5]{lurie2017higher} and, from an alternative point of view, in \cite{MR3431668}. We also recall the formalism of local systems of \cite{MR4368480}.

Needless to say, this section contains no originality.

\subsection{Factorization homology}

\begin{void}
\label{void-category-of-manifolds}
Fix an integer $n\ge 0$.

Denote by $\Mfd_n$ the topological category of $n$-dimensional topological manifolds admitting finite ``good covers" (\emph{cf.}~\cite[Definition 2.1]{MR3431668}).

We shall suppress the operation of taking homotopy coherent nerves from our notation, and regard $\Mfd_n$ as an $\infty$-category.

Thus, the mapping space between $M_1, M_2 \in \Mfd_n$ is the $\infty$-groupoid $\Sing \Emb(M_1, M_2)$, where $\Emb(M_1, M_2)$ is the set of embeddings $M_1 \rightarrow M_2$, endowed with the compact-open topology, and $\Sing$ denotes the functor of singular chains.
\end{void}

\begin{void}
Denote by $\deloop\Top(n)$ the full subcategory of $\Mfd_n$ consisting of objects homeomorphic to the Euclidean space $\reals^n$.

The Kister--Mazur theorem shows that $\Top(n) := \Sing \Emb(\reals^n, \reals^n)$ is a \emph{grouplike} monoid. In particular, $\deloop \Top(n)$ may be identified with the classifying space of $\Top(n)$.

Given $M \in \Mfd_n$, we have a forgetful functor
\begin{equation}
\label{eq-tangent-classifier-construction}
\deloop\Top(n)_{/M} \rightarrow \deloop\Top(n).
\end{equation}
\end{void}

\begin{rem}
\label{rem-tangent-classifier-via-yoneda}
The slice $\infty$-category $\deloop\Top(n)_{/M}$ is a Kan complex equivalent to $\Sing M$ (\emph{cf.}~\cite[Remark 5.4.5.2]{lurie2017higher}), so \eqref{eq-tangent-classifier-construction} determines a morphism of $\infty$-groupoids
$$
\tau_M : \Sing M \rightarrow \deloop\Top(n).
$$
By \cite[Corollary 2.13]{MR3431668}, this morphism classifies the tangent microbundle of $M$.
\end{rem}

\begin{void}[The $\infty$-operad $\mathbb E_M^{\otimes}$]
Recall that $\deloop\Top(n)$ is the underlying $\infty$-category of an $\infty$-operad $\deloop\Top(n)^{\otimes}$ (\emph{cf.}~\cite[Definition 5.4.2.1]{lurie2017higher}).

Moreover, each $\infty$-category $\mathscr C$ functorially determines an $\infty$-operad $\mathscr C^{\sqcup}$ (\emph{cf.}~\cite[\S2.4.3]{lurie2017higher}) and we have a natural morphism $\deloop\Top(n)^{\otimes} \rightarrow \deloop \Top(n)^{\sqcup}$ (\emph{cf.}~\cite[Remark 5.4.2.7]{lurie2017higher}).

For each $M \in \Mfd_n$, the $\infty$-operad $\mathbb E_M^{\otimes}$ is defined as the fiber product
$$
\mathbb E_M^{\otimes} := \deloop\Top(n)^{\otimes} \times_{\deloop\Top(n)^{\sqcup}} (\deloop\Top(n)_{/M})^{\sqcup}.
$$
\end{void}

\begin{rem}
Let us give an informal description of $\mathbb E_M^{\otimes}$.

By definition, the underlying $\infty$-category of $\mathbb E_M^{\otimes}$ is $\deloop\Top(n)_{/M}$. Given objects $x : U \rightarrow M$ and $x_j : U_j \rightarrow M$ ($j = 1, \cdots, m$) of $\deloop\Top(n)_{/M}$, an $m$-ary operation from $\{x_j\}_{j = 1,\cdots, m}$ to $\{x\}$ in $\mathbb E_M^{\otimes}$ consists of an embedding $U_1\sqcup\cdots\sqcup U_m \rightarrow U$ together with an identification of the composite
$$
U_j \rightarrow U_1 \sqcup\cdots \sqcup U_m \rightarrow U \xrightarrow{x} M
$$
with $x_j$ as morphisms in $\Mfd_n$, for each $j = 1,\cdots, m$.
\end{rem}

\begin{void}
\label{void-em-algebras}
We fix $M \in \Mfd_n$ in the remainder of this section.

Let $\mathscr O$ be a symmetric monoidal $\infty$-category. We shall refer to $\mathbb E_M^{\otimes}$-algebras in $\mathscr O$ simply as \emph{$\mathbb E_M$-algebras}. They form an $\infty$-category $\Alg_{\mathbb E_M}(\mathscr O)$.

Since the $\infty$-category underlying $\mathbb E_M^{\otimes}$ is $\deloop\Top(n)_{/M}$, or equivalently $\Sing M$ (\emph{cf.}~Remark \ref{rem-tangent-classifier-via-yoneda}), we have a forgetful functor
\begin{equation}
\label{eq-em-algebra-forgetful}
\Alg_{\mathbb E_M}(\mathscr O) \rightarrow \Fun(\Sing M, \mathscr O).
\end{equation}

Given $\mathscr A \in \Alg_{\mathbb E_M}(\mathscr O)$ and $x\in\Sing M$, we write $\mathscr A_x \in \mathscr O$ for the image of $x$ under the functor underlying $\mathscr A$ and refer to it as the \emph{fiber} of $\mathscr A$ at $x$.
\end{void}

\begin{void}[Factorization homology]
When $\mathscr O$ is sifted-complete, we shall construct a functor
\begin{equation}
\label{eq-factorization-homology}
\int_M : \Alg_{\mathbb E_M}(\mathscr O) \rightarrow \mathscr O,
\end{equation}
whose value at $\mathscr A \in \Alg_{\mathbb E_M}(\mathscr O)$ is called the \emph{factorization homology} of $\mathscr A$ over $M$.

Denote by $\Disk_n$ the full subcategory of $\Mfd_n$ consisting of objects homeomorphic to $S \times \reals^n$ for some finite set $S$.

The construction of \eqref{eq-factorization-homology} relies on a functor of $\infty$-categories
\begin{equation}
\label{eq-disk-to-em-functor}
\Disk_{n/M} \rightarrow \mathbb E_M^{\otimes},
\end{equation}
which we shall define presently.\footnote{The functor \eqref{eq-disk-to-em-functor} appears implicitly in the proof of \cite[Theorem 5.5.2.5]{lurie2017higher}, but we could not locate its definition in \emph{op.cit.}.}
\end{void}

\begin{void}[Construction of \eqref{eq-disk-to-em-functor}]
It suffices to construct functors
\begin{align}
\label{eq-disk-to-em-functor-first}
	\Disk_{n/M} &\rightarrow \deloop\Top(n)^{\otimes}, \\
\label{eq-disk-to-em-functor-second}
	\Disk_{n/M} &\rightarrow (\deloop\Top(n)_{/M})^{\sqcup},
\end{align}
and identify their compositions with the functors to $\deloop\Top(n)^{\sqcup}$.

We follow the notation of \cite[\S2.1.1]{lurie2017higher} and write $\Fin_*$ for the category of pointed finite sets. We express any $S \in \Fin_*$ as $S^{\circ} \sqcup \{*\}$, where $*$ is the distinguished element. Observe that $\Disk_{n/M}$ admits a natural functor to $\Fin_*$, sending $U \rightarrow M$ to the pointed finite set $\pi_0 U \sqcup\{*\}$. The functors \eqref{eq-disk-to-em-functor-first} and \eqref{eq-disk-to-em-functor-second}, which we shall construct, intertwine this functor to $\Fin_*$ with the structural functors of the $\infty$-operads $\deloop\Top(n)^{\otimes}$ and $(\deloop\Top(n)_{/M})^{\sqcup}$.

The functor \eqref{eq-disk-to-em-functor-first} is defined as the composition of the forgetful functor $\Disk_{n/M} \rightarrow \Disk_n$ with the faithful embedding of \emph{topological} categories\footnote{The topological category defining $\deloop\Top(n)^{\otimes}$ is denoted by ${}^t\mathbb E^{\otimes}_{\deloop\Top(n)}$ in \cite[Definition 5.4.2.1]{lurie2017higher} and \eqref{eq-disk-embedding-in-btop-operad} identifies $\Disk_n$ with its faithful subcategory consisting of all objects and active morphisms.}
\begin{equation}
\label{eq-disk-embedding-in-btop-operad}
\Disk_n \rightarrow \deloop\Top(n)^{\otimes}.
\end{equation}

The functor \eqref{eq-disk-to-em-functor-second} is adjoint to a functor
\begin{equation}
\label{eq-disk-to-em-functor-second-adjoint}
\Disk_{n/M} \times_{\Fin_*} \Gamma^* \rightarrow \deloop\Top(n)_{/M},
\end{equation}
where $\Gamma^*$ denotes the category of pairs $(S, i)$, with $S \in \Fin_*$ and $i \in S^{\circ}$ (\emph{cf.}~\cite[Construction 2.4.3.1]{lurie2017higher}). The functor \eqref{eq-disk-to-em-functor-second-adjoint} sends $(U \rightarrow M, i)$ to the restriction of $U \rightarrow M$ to the connected component of $U$ corresponding to $i$.

To identify the compositions of \eqref{eq-disk-to-em-functor-first} and \eqref{eq-disk-to-em-functor-second} with the natural functors to $\deloop\Top(n)^{\sqcup}$, we observe that the composition of \eqref{eq-disk-to-em-functor-second-adjoint} with the forgetful functor to $\deloop\Top(n)$ factors as in the following commutative diagram
$$
\begin{tikzcd}[column sep = 1em]
	\Disk_{n/M} \times_{\Fin_*} \Gamma^* \ar[r]\ar[d] & \deloop\Top(n)_{/M} \ar[d] \\
	\Disk_n \times_{\Fin_*} \Gamma^* \ar[r] & \deloop\Top(n)
\end{tikzcd}
$$
Here, the bottom horizontal arrow is defined by evaluation at $i$. By construction, it is adjoint to the composition of \eqref{eq-disk-embedding-in-btop-operad} with the functor $\deloop\Top(n)^{\otimes} \rightarrow \deloop\Top(n)^{\sqcup}$.
\end{void}

\begin{void}[Construction of \eqref{eq-factorization-homology}]
\label{void-factorization-homology-construction}
The functor \eqref{eq-factorization-homology} is the composition of the restriction along \eqref{eq-disk-to-em-functor} with the functor of taking colimits over $\Disk_{n/M}$. In other words, we have
$$
\int_M \mathscr A := \colim_{U \in \Disk_{n/M}} \mathscr A(U).
$$

The fact that this colimit exists (assuming that $\mathscr O$ is sifted-complete) is because the $\infty$-category $\Disk_{n/M}$ is sifted (\emph{cf.}~\cite[Proposition 5.5.2.15]{lurie2017higher}).
\end{void}

\begin{rem}
\label{rem-factorization-homology-symmetric-monoidal-functoriality}
By \cite[Proposition 5.5.2.17(2)]{lurie2017higher}, \eqref{eq-factorization-homology} is functorial in $\mathscr O$: Given sifted-complete symmetric monoidal $\infty$-categories $\mathscr O_1$, $\mathscr O_2$ and a symmetric monoidal functor $\mathscr O_1 \rightarrow \mathscr O_2$ commuting with sifted colimits, we have a commutative square
$$
\begin{tikzcd}[column sep = 1.5em]
	\Alg_{\mathbb E_M}(\mathscr O_1) \ar[r, "\int_M"]\ar[d] & \mathscr O_1 \ar[d] \\
	\Alg_{\mathbb E_M}(\mathscr O_2) \ar[r, "\int_M"] & \mathscr O_2
\end{tikzcd}
$$
\end{rem}

\begin{rem}
\label{rem-factorization-homology-symmetric-monoidal}
We shall endow the $\infty$-category $\Alg_{\mathbb E_M}(\mathscr O)$ with the symmetric monoidal structure defined by pointwise tensor product (\emph{cf.}~\cite[Example 3.2.4.4]{lurie2017higher}).

Suppose that $\mathscr O$ is sifted-complete. Then the functor of factorization homology \eqref{eq-factorization-homology} is symmetric monoidal by \cite[Theorem 5.5.3.2]{lurie2017higher}.
\end{rem}

\begin{void}
Let $\mathscr O$ be a sifted-complete symmetric monoidal $\infty$-category. We shall prove that \eqref{eq-factorization-homology} commutes with the \emph{relative} tensor product.

Namely, given an associative algebra $\mathscr A$ in $\Alg_{\mathbb E_M}(\mathscr O)$ and right (respectively, left) $\mathscr A$-module $\mathscr B_1$ (respectively, $\mathscr B_2$), we may form the relative tensor product as geometric realization of the Bar complex (\emph{cf.}~\cite[Definition 4.4.2.10]{lurie2017higher})
$$
\mathscr B_1 \otimes_{\mathscr A} \mathscr B_2 := \colim \mathrm{Bar}_{\mathscr A}(\mathscr B_1, \mathscr B_2)_{\bullet}.
$$
\end{void}

\begin{lem}
\label{lem-factorization-homology-tensor-distribution}
There is a natural isomorphism
\begin{equation}
\label{eq-factorization-homology-tensor-distribution}
	\int_M \mathscr B_1 \otimes_{\mathscr A} \mathscr B_2 \simeq \int_M\mathscr B_1 \otimes_{\int_M \mathscr A} \int_M \mathscr B_2.
\end{equation}
\end{lem}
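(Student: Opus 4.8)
The plan is to exhibit both sides of \eqref{eq-factorization-homology-tensor-distribution} as the colimit of a single two-dimensional diagram and invoke the compatibility of $\int_M$ with sifted colimits. First I would record the formal mechanism: the relative tensor product $\mathscr B_1 \otimes_{\mathscr A} \mathscr B_2$ is the geometric realization of the simplicial object $\mathrm{Bar}_{\mathscr A}(\mathscr B_1, \mathscr B_2)_{\bullet}$, whose $p$-th term is $\mathscr B_1 \otimes \mathscr A^{\otimes p} \otimes \mathscr B_2$, the tensor product being the pointwise one on $\Alg_{\mathbb E_M}(\mathscr O)$ from Remark \ref{rem-factorization-homology-symmetric-monoidal}. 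Since $\Delta^{\opposite}$ is sifted and, by \ref{void-factorization-homology-construction}, $\int_M$ is computed as a colimit over the sifted $\infty$-category $\Disk_{n/M}$, the two colimits commute; hence
$$
\int_M \bigl(\mathscr B_1 \otimes_{\mathscr A} \mathscr B_2\bigr) \simeq \int_M \colim_{[p]\in\Delta^{\opposite}} \mathrm{Bar}_{\mathscr A}(\mathscr B_1, \mathscr B_2)_p \simeq \colim_{[p]\in\Delta^{\opposite}} \int_M \mathrm{Bar}_{\mathscr A}(\mathscr B_1, \mathscr B_2)_p.
$$
Strictly, the claim that $\int_M$ preserves this geometric realization follows because it is a colimit over a sifted category, and sifted colimits commute with each other; alternatively one may cite \cite[Proposition 5.5.2.17(2)]{lurie2017higher} or note that $\int_M$, being a colimit functor, preserves all colimits that exist in $\Alg_{\mathbb E_M}(\mathscr O)$ and are computed pointwise.

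Next I would identify the terms. Because the tensor product on $\Alg_{\mathbb E_M}(\mathscr O)$ is pointwise and $\int_M$ is symmetric monoidal (Remark \ref{rem-factorization-homology-symmetric-monoidal}), we get
$$
\int_M \mathrm{Bar}_{\mathscr A}(\mathscr B_1, \mathscr B_2)_p \simeq \int_M\bigl(\mathscr B_1 \otimes \mathscr A^{\otimes p} \otimes \mathscr B_2\bigr) \simeq \Bigl(\int_M\mathscr B_1\Bigr) \otimes \Bigl(\int_M\mathscr A\Bigr)^{\otimes p} \otimes \Bigl(\int_M\mathscr B_2\Bigr),
$$
and these assemble, over $[p]\in\Delta^{\opposite}$, into the Bar complex $\mathrm{Bar}_{\int_M\mathscr A}\bigl(\int_M\mathscr B_1, \int_M\mathscr B_2\bigr)_{\bullet}$. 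Taking the colimit over $\Delta^{\opposite}$ then yields $\int_M\mathscr B_1 \otimes_{\int_M\mathscr A} \int_M\mathscr B_2$, which is the right-hand side of \eqref{eq-factorization-homology-tensor-distribution}. Chaining the displayed equivalences gives the result, and naturality in $(\mathscr A, \mathscr B_1, \mathscr B_2)$ is inherited from the naturality of each step.

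The main obstacle is not the colimit bookkeeping but making precise the claim that the Bar \emph{simplicial object} itself — not just its individual terms — is carried by $\int_M$ to the Bar simplicial object of the images, i.e.\ that $\int_M$ takes the associative-algebra structure of $\mathscr A$ in $\Alg_{\mathbb E_M}(\mathscr O)$ and its module structures to the corresponding structures in $\mathscr O$, coherently. This is exactly where I would lean on the symmetric monoidality of $\int_M$: a symmetric monoidal functor preserves algebra and module objects and the associated Bar constructions, so the simplicial object $\mathrm{Bar}_{\mathscr A}(\mathscr B_1, \mathscr B_2)_{\bullet}$ in $\Alg_{\mathbb E_M}(\mathscr O)$ maps to $\mathrm{Bar}_{\int_M\mathscr A}(\int_M\mathscr B_1, \int_M\mathscr B_2)_{\bullet}$ in $\mathscr O$, compatibly with the augmentations to the relative tensor products. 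Once this functoriality of the Bar construction under a symmetric monoidal, sifted-colimit-preserving functor is invoked, the proof is the three-line diagram chase above.
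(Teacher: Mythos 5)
Your proposal is essentially the same argument as the paper's: reduce to showing that $\int_M$ commutes with sifted colimits (so that it preserves the geometric realization defining the relative tensor product), then use symmetric monoidality of $\int_M$ to identify the terms of the Bar complex. The paper phrases this more tersely — it invokes symmetric monoidality once, then proves that $\int_M$ commutes with sifted colimits — but the underlying decomposition is identical to yours.

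Where your writeup has a genuine gap is precisely at the step you flag as the subtle one. Your justification that $\int_M$ preserves the geometric realization reads ``it is a colimit over a sifted category, and sifted colimits commute with each other,'' or alternatively ``$\int_M$, being a colimit functor, preserves all colimits that exist in $\Alg_{\mathbb E_M}(\mathscr O)$ and are computed pointwise.'' Neither is a complete argument: $\int_M$ is \emph{not} a colimit functor on $\Alg_{\mathbb E_M}(\mathscr O)$ — it is the composite of restriction along \eqref{eq-disk-to-em-functor} (into $\Fun(\Disk_{n/M}, \mathscr O)$) with $\colim_{\Disk_{n/M}}$. The commutation-of-sifted-colimits argument applies only to the second factor. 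The missing ingredient is that \emph{sifted colimits in $\Alg_{\mathbb E_M}(\mathscr O)$ are computed pointwise}, i.e.\ are preserved by the restriction functor $\Alg_{\mathbb E_M}(\mathscr O) \rightarrow \Fun(\Disk_{n/M}, \mathscr O)$; this is exactly the content of \cite[Proposition 3.2.3.1]{lurie2017higher}, which the paper cites at this point and which you should cite too. Your alternate citation of \cite[Proposition 5.5.2.17(2)]{lurie2017higher} is also misplaced: that result concerns functoriality of $\int_M$ in the coefficient category $\mathscr O$ (as in Remark~\ref{rem-factorization-homology-symmetric-monoidal-functoriality}), not commutation with sifted colimits.
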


\begin{proof}
Since \eqref{eq-factorization-homology} is symmetric monoidal (\emph{cf.}~Remark \ref{rem-factorization-homology-symmetric-monoidal}), it suffices to show that \eqref{eq-factorization-homology} commutes with sifted colimits. By construction, it suffices to show that the functor of pre-composition with \eqref{eq-disk-to-em-functor}
$$
\Alg_{\mathbb E_M}(\mathscr O) \rightarrow \Fun(\Disk_{n/M}, \mathscr O)
$$
commutes with sifted colimits.

Since colimits in $\Fun(\Disk_{n/M}, \mathscr O)$ are formed pointwise, this assertion follows from \cite[Proposition 3.2.3.1]{lurie2017higher}.
\end{proof}

\begin{void}
\label{void-factorization-homology-calg}
Finally, we recall the computation of factorization homology of (families of) commutative algebras.

Let $\mathscr O$ be a symmetric monoidal $\infty$-category. Write $\CAlg(\mathscr O)$ for the $\infty$-category of commutative algebras in $\mathscr O$. The pointwise tensor structure on $\CAlg(\mathscr O)$ coincides with the co-Cartesian symmetric monoidal structure (\emph{cf.}~\cite[Proposition 3.2.4.7]{lurie2017higher}).

Since the underlying $\infty$-category of $\mathbb E_M^{\otimes}$ is identified with $\Sing M$ (\emph{cf.}~Remark \ref{rem-tangent-classifier-via-yoneda}), we have a canonical equivalence of $\infty$-categories (\emph{cf.}~\cite[Proposition 2.4.3.9]{lurie2017higher})
\begin{equation}
\label{eq-em-commutative-algebra}
\Fun(\Sing M, \CAlg(\mathscr O)) \simeq \Alg_{\mathbb E_M}(\CAlg(\mathscr O)).
\end{equation}

Given a functor $\mathscr A : \Sing M \rightarrow \CAlg(\mathscr O)$, we shall denote its image under \eqref{eq-em-commutative-algebra} by $\mathscr A_M$.
\end{void}

\begin{lem}
\label{lem-factorization-homology-calg}
Suppose that $\mathscr O$ is cocomplete. Given a functor $\mathscr A : \Sing M \rightarrow \CAlg(\mathscr O)$, there is a canonical isomorphism in $\CAlg(\mathscr O)$:
\begin{equation}
\label{eq-factorization-homology-calg}
\int_M \mathscr A_M \simeq \colim \mathscr A.
\end{equation}
\end{lem}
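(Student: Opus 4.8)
The plan is to unwind the defining colimit $\int_M\mathscr A_M = \colim_{U\in\Disk_{n/M}}\mathscr A_M(U)$ of \ref{void-factorization-homology-construction} using that the symmetric monoidal structure on $\CAlg(\mathscr O)$ is coCartesian (\cite[Proposition 3.2.4.7]{lurie2017higher}; \emph{cf.}~\ref{void-factorization-homology-calg}), and then to recognize the resulting expression as $\colim\mathscr A$. Note that $\CAlg(\mathscr O)$ is cocomplete because $\mathscr O$ is, so every colimit below exists; all of them are formed in $\CAlg(\mathscr O)$, where a finite coproduct is the tensor product over the unit.

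First I would identify the integrand. Combining the description of the functor \eqref{eq-disk-to-em-functor} recalled above --- whose component towards $(\deloop\Top(n)_{/M})^{\sqcup}$ sends a disk $U$ to the family of its connected components, each mapped to $M$ --- with the definition of the equivalence \eqref{eq-em-commutative-algebra}, one finds, naturally in $U\in\Disk_{n/M}$,
$$
\mathscr A_M(U)\ \simeq\ \coprod_{c\,\in\,\pi_0 U}\mathscr A(x_c),
$$
a coproduct in $\CAlg(\mathscr O)$, where $x_c\in\Sing M$ is the point classified by $U_c\hookrightarrow U\rightarrow M$ (well-defined up to the contractible $\infty$-groupoid $\Sing U_c$). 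Since $U_c$ is homeomorphic to $\reals^n$ we have $\Sing U_c\simeq\ast$, hence $\mathscr A(x_c)\simeq\colim_{\Sing U_c}(\mathscr A\circ f_U)$, where $f_U:\Sing U\rightarrow\Sing M$ is induced by $U\hookrightarrow M$; as a finite coproduct in $\CAlg(\mathscr O)$ is a colimit over a discrete set and $\Sing U = \bigsqcup_c\Sing U_c$, this rewrites the integrand as $\mathscr A_M(U)\simeq\colim_{\Sing U}(\mathscr A\circ f_U)$. Commuting the two colimits --- Fubini along the left fibration $\mathscr E\rightarrow\Disk_{n/M}$ classified by $U\mapsto\Sing U$, whose total space computes $\colim_{U\in\Disk_{n/M}}\Sing U$ --- yields
$$
\int_M\mathscr A_M\ \simeq\ \colim_{U\in\Disk_{n/M}}\colim_{\Sing U}(\mathscr A\circ f_U)\ \simeq\ \colim_{\mathscr E}(\mathscr A\circ q),
$$
with $q:\mathscr E\rightarrow\Sing M$ the map assembled from the $f_U$.

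It then remains to show that $q:\mathscr E\simeq\colim_{U\in\Disk_{n/M}}\Sing U\rightarrow\Sing M$ is an equivalence of $\infty$-groupoids; granting this, $\colim_{\mathscr E}(\mathscr A\circ q)\simeq\colim_{\Sing M}\mathscr A = \colim\mathscr A$ in $\CAlg(\mathscr O)$, and since all the identifications above are canonical this is the asserted isomorphism. This last point --- the familiar statement that the Euclidean open subsets generate the homotopy type of $M$ --- is the only non-formal ingredient, and the step I expect to be the main obstacle. When $M$ is homeomorphic to $S\times\reals^n$ it is immediate, since then $\Disk_{n/M}$ has a terminal object and $q$ is evaluation at it; in general I would deduce it from a \v{C}ech argument over a finite good cover of $M$ (which exists by the definition of $\Mfd_n$), reducing to the Euclidean case and to ordinary \v{C}ech descent for the homotopy type of $M$ --- alternatively one can cite it from \cite{MR3431668} or \cite{lurie2017higher}, where this presentation of $\Sing M$ underlies the treatment of factorization homology. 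Finally, it bears emphasizing that the rewriting of the integrand used crucially that colimits here are computed in $\CAlg(\mathscr O)$, where tensor products are coproducts; the manipulation has no counterpart over a general $\mathscr O$, which is why the statement is special to families of \emph{commutative} algebras.
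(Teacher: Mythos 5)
Your proof is correct and takes essentially the same route as the paper, whose proof is the one-line citation to the proof of \cite[Theorem 5.5.3.8]{lurie2017higher}. The three steps you isolate --- the coCartesian identification of the integrand $\mathscr A_M(U)$ as a finite coproduct in $\CAlg(\mathscr O)$, the Fubini commutation over the left fibration, and the cofinality fact $\colim_{U\in\Disk_{n/M}}\Sing U\simeq\Sing M$ --- are precisely the content the paper is delegating to that reference.
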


\begin{proof}
This is a reformulation of (the proof of) \cite[Theorem 5.5.3.8]{lurie2017higher}.
\end{proof}

\subsection{Nonabelian Poincar\'e duality}

\begin{void}
\label{void-compactly-supported-sections}
Denote by $\Spc$ the $\infty$-category of $\infty$-groupoids, endowed with the Cartesian symmetric monoidal structure. Denote by $\Spc_*$ the $\infty$-category of pointed $\infty$-groupoids.

By unstraightening, each functor $\mathscr X : \Sing M \rightarrow \Spc_*$ may be regarded as a Kan fibration over $\Sing M$, endowed with a neutral section. For any $\infty$-groupoid $Y$ over $\Sing M$, we write $\Gamma(Y, \mathscr X)$ for the pointed space $\Maps_{/\Sing M}(Y, \mathscr X)$.

Given a functor $\mathscr X : \Sing M \rightarrow \Spc_*$ and an open subset $U \subset M$, we have the $\infty$-groupoid of \emph{compactly supported sections} of $\mathscr X$ over $U$:
\begin{equation}
\label{eq-compactly-supported-sections}
\Gamma_c(U, \mathscr X) := \colim_{K \subset U} \Gamma(\Sing M, \mathscr X) \times_{\Gamma(\Sing M\setminus K, \mathscr X)} *,
\end{equation}
where the colimit is taken over the poset of compact subsets $K$ of $U$. The expression \eqref{eq-compactly-supported-sections} depends (covariantly) functorially on $U$ and on $\mathscr X$.
\end{void}

\begin{void}
By \cite[Definition 5.5.6.2, Remark 5.5.6.3]{lurie2017higher}, there is a functor of $\infty$-categories
\begin{equation}
\label{eq-functor-local-loop}
	\Omega_M : \Fun(\Sing M, \Spc_*) \rightarrow \Alg_{\mathbb E_M}(\Spc)
\end{equation}
extending \eqref{eq-compactly-supported-sections} in the following sense: Given any $x \in \Sing M$, corresponding to an object $U \rightarrow M$ of the underlying $\infty$-category of $\mathbb E_M^{\otimes}$ (\emph{cf.}~Remark \ref{rem-tangent-classifier-via-yoneda}), the functor $\ev_x$ of taking fiber at $x$ (\emph{cf.}~\S\ref{void-em-algebras}) renders the diagram below commute:
$$
\begin{tikzcd}[column sep = 1.5em]
	\Fun(\Sing M, \Spc_*) \ar[r, "\Omega_M"]\ar[dr, swap, "{\Gamma_c(U, \cdot)}"] & \Alg_{\mathbb E_M}(\Spc) \ar[d, "\ev_x"] \\
	& \Spc
\end{tikzcd}
$$
\end{void}

\begin{lem}
\label{lem-functor-local-loop-finite-limits}
The functor \eqref{eq-functor-local-loop} commutes with finite limits.
\end{lem}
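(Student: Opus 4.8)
The plan is to reduce the statement to its ``local'' counterpart: for every open subset $U \subset M$ homeomorphic to $\reals^n$, the functor $\Gamma_c(U, \cdot) : \Fun(\Sing M, \Spc_*) \to \Spc$ from \eqref{eq-compactly-supported-sections} commutes with finite limits. To pass from this to the lemma, I would use that the forgetful functor $\Alg_{\mathbb E_M}(\Spc) \to \Fun(\Sing M, \Spc)$ of \eqref{eq-em-algebra-forgetful} is conservative and preserves limits (limits of $\mathbb E_M$-algebras are computed on underlying functors, \emph{cf.}~\cite[\S3.2.2]{lurie2017higher}), and that limits in $\Fun(\Sing M, \Spc)$ are computed pointwise; together these show that the evaluation functors $\ev_x : \Alg_{\mathbb E_M}(\Spc) \to \Spc$ for $x \in \Sing M$ preserve limits and are jointly conservative. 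It then suffices to check that each $\ev_x \circ \Omega_M$ preserves finite limits, and for $x$ corresponding to an object $U \to M$ of $\mathbb E_M^{\otimes}$ (so $U \cong \reals^n$) we have $\ev_x \circ \Omega_M \simeq \Gamma_c(U, \cdot)$ by the characterizing property of $\Omega_M$ recorded in the commutative triangle above.

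For the local statement, I would proceed in three steps. First, for any $\infty$-groupoid $Y$ over $\Sing M$, the functor $\Gamma(Y, \cdot) = \Maps_{/\Sing M}(Y, \cdot)$ preserves limits: the passage from a functor $\Sing M \to \Spc_*$ to the corresponding pointed Kan fibration over $\Sing M$ is part of an equivalence of $\infty$-categories, hence preserves limits, and $\Maps_{/\Sing M}(Y, \cdot)$ preserves limits as well. Second, for each compact $K \subset U$, the functor
\[
	\mathscr X \longmapsto \Gamma(\Sing M, \mathscr X) \times_{\Gamma(\Sing M \setminus K, \mathscr X)} *
\]
preserves finite limits, being obtained from limit-preserving functors by forming a pullback (and limits commute with limits). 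Third, the poset of compact subsets of $U$ is directed, because the union of two compact subsets is compact, so $\Gamma_c(U, \cdot)$ is the filtered colimit over $K$ of the functors just described; since filtered colimits commute with finite limits in $\Spc$, the functor $\Gamma_c(U, \cdot)$ preserves finite limits, as desired.

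I do not anticipate a real obstacle here: the argument is essentially formal once \eqref{eq-functor-local-loop} is unwound through its defining property. The single non-formal ingredient is the commutation of filtered colimits with finite limits in $\Spc$, which is standard. If there is any subtlety, it lies in the claim that $\Gamma(Y, \cdot)$ preserves limits compatibly with basepoints; this, however, follows from the corresponding fact for the forgetful functor $\Spc_* \to \Spc$ together with the pointwise description of limits in $\Fun(\Sing M, \Spc_*)$.
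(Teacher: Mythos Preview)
Your proposal is correct and follows essentially the same route as the paper's proof: reduce to the fiberwise functors $\Gamma_c(U,\cdot)$ via the joint conservativity and limit-preservation of the $\ev_x$, then invoke the commutation of filtered colimits with finite limits in $\Spc$. The paper states these two steps tersely without unpacking $\Gamma_c(U,\cdot)$ into its constituent pieces, whereas you spell out why each ingredient (the mapping spaces $\Gamma(Y,\cdot)$, the fiber product, and the filtered colimit over compacts) preserves the relevant limits.
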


\begin{proof}
The functors $\ev_x$ preserve limits and are jointly conservative when taken over all $x \in \Sing M$. Therefore, it suffices to prove that each functor
$$
\Gamma_c(U, \cdot) : \Fun(\Sing M, \Spc_*) \rightarrow \Spc
$$
preserves finite limits.

This holds because filtered colimits and finite limits commute in $\Spc$.
\end{proof}

\begin{rem}
\label{rem-functor-local-loop-calg}
It follows from Lemma \ref{lem-functor-local-loop-finite-limits} that the functor \eqref{eq-functor-local-loop} is symmetric monoidal with respect to the Cartesian symmetric monoidal structures.

In particular, it induces a functor
\begin{equation}
\label{eq-functor-local-loop-calg}
\Omega_M : \Fun(\Sing M, \CAlg(\Spc)) \rightarrow \CAlg(\Alg_{\mathbb E_M}(\Spc)).
\end{equation}
The target of \eqref{eq-functor-local-loop-calg} is equivalent to $\Alg_{\mathbb E_M}(\CAlg(\Spc))$, as both $\infty$-categories consist of algebra objects over the tensor product $\infty$-operad (\emph{cf.}~\cite[Proposition 2.2.5.6]{lurie2017higher}).
\end{rem}

\begin{void}
Next, we shall recall the statement of nonabelian Poincar\'e duality, due to Salvatore, Segal, and Lurie (\emph{cf.}~\cite{MR1851264, MR2681691, lurie2017higher}).
\end{void}

\begin{prop}[Nonabelian Poincar\'e duality]
\label{prop-poincare-duality}
Let $\mathscr X : \Sing M \rightarrow \Spc_*$ be a functor valued in $n$-connective $\infty$-groupoids. Then there is a canonical isomorphism
\begin{equation}
\label{eq-nonabelian-poincare-duality}
\int_M \Omega_M(\mathscr X) \simeq \Gamma_c(M, \mathscr X).
\end{equation}
\end{prop}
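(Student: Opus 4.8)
The plan is to reduce the statement to the version of nonabelian Poincaré duality stated in \cite[\S5.5.6]{lurie2017higher}, which establishes an equivalence $\int_M A \simeq \Gamma_c(M, \mathscr X)$ whenever $A = \Omega_M(\mathscr X)$ for an $n$-connective $\mathscr X$. The content of our formulation is almost entirely bookkeeping: we must match our conventions for $\mathbb E_M$-algebras, for the functor $\Omega_M$, and for factorization homology (all recalled above) with those of \emph{op.~cit.}. First I would recall that Lurie's statement concerns the functor denoted there by $C_* : \Fun(\Sing M, \Spc_*) \to \Alg_{\mathbb E_M}(\Spc)$ (or its $\mathbb E_M$-analogue of ``compactly supported sections''), and observe that the defining property we imposed on $\Omega_M$ in \eqref{eq-functor-local-loop} — namely that $\ev_x \circ \Omega_M$ computes $\Gamma_c(U, \cdot)$ for every $x$ — pins down $\Omega_M$ up to canonical equivalence, since the $\ev_x$ are jointly conservative. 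Hence $\Omega_M$ agrees with the functor appearing in Lurie's theorem.

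The second step is to identify the two notions of factorization homology. Our $\int_M$ is defined in \S\ref{void-factorization-homology-construction} as the colimit over $\Disk_{n/M}$ of the restriction along the functor \eqref{eq-disk-to-em-functor}; this is the same colimit-over-disks formula used in the proof of \cite[Theorem 5.5.2.5]{lurie2017higher}, and by \cite[Proposition 5.5.2.15]{lurie2017higher} it computes the operadic left Kan extension defining factorization homology in \emph{loc.~cit.}. So with these identifications in place, the desired equivalence \eqref{eq-nonabelian-poincare-duality} is a direct transcription of Lurie's nonabelian Poincaré duality, applied to the $n$-connective functor $\mathscr X$.

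The only mildly delicate point — and the step I expect to require the most care — is the compatibility between our construction of the comparison functor \eqref{eq-disk-to-em-functor} and the implicit one in \emph{op.~cit.}; as the footnote in the excerpt notes, Lurie does not spell out this functor, so one must check that our explicit recipe (built from \eqref{eq-disk-embedding-in-btop-operad} and the restriction-to-a-component functor \eqref{eq-disk-to-em-functor-second-adjoint}) agrees with the one used to set up factorization homology and, crucially, is compatible with the tangential structure map $\tau_M$ of Remark \ref{rem-tangent-classifier-via-yoneda}. Once this is checked on objects and active morphisms, where everything is manifestly given by restriction of bundles along embeddings, naturality propagates formally. I would therefore structure the proof as: (i) recall Lurie's statement; (ii) match $\Omega_M$ via the conservativity of the $\ev_x$; (iii) match $\int_M$ via the disk-colimit formula and \eqref{eq-disk-to-em-functor}; (iv) conclude. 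Alternatively, if one prefers to avoid reproving the compatibility of \eqref{eq-disk-to-em-functor}, one can simply cite \cite[Theorem 5.5.6.6]{lurie2017higher} directly and note that our $\Omega_M$ and $\int_M$ are the functors appearing there, in which case the proof collapses to steps (i), (ii) and the disk-colimit identification of step (iii).
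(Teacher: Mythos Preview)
Your proposal is correct and takes essentially the same approach as the paper: the paper's entire proof is the single sentence ``This is \cite[Theorem 5.5.6.6]{lurie2017higher},'' which is precisely the alternative you describe in your final sentence. The convention-matching you outline in steps (ii)--(iii) is reasonable due diligence but is elided in the paper, since the functors $\Omega_M$ and $\int_M$ were defined above by direct reference to Lurie's constructions.
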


\begin{proof}
This is \cite[Theorem 5.5.6.6]{lurie2017higher}.
\end{proof}

\begin{void}[Local trace map]
\label{void-local-trace-map}
In the remainder of this subsection, we will explain how \eqref{eq-nonabelian-poincare-duality} interacts with the trace maps in \emph{abelian} Poincar\'e duality, when an orientation is provided. For a more complete treatment, see \cite[\S4]{MR4197982}.

Denote by $\integers\Mod$ the stable $\infty$-category of $H\integers$-module spectra. Forgetting the $H\integers$-action and applying connective truncation, we obtain a functor
\begin{equation}
\label{eq-linear-map-forgetful-functor}
\integers\Mod \rightarrow \CAlg(\Spc).
\end{equation}

Let $\mathscr A : \Sing M \rightarrow \integers\Mod$ be a functor. Applying \eqref{eq-functor-local-loop-calg} and \eqref{eq-em-commutative-algebra} to the composition of $\mathscr A$ with \eqref{eq-linear-map-forgetful-functor}, we obtain $\mathbb E_M$-algebras $\Omega_M(\mathscr A)$, respectively $\mathscr A_M$ in $\CAlg(\Spc)$. When $M$ is equipped with a ($\integers$-)orientation, they are related as follows:
\begin{equation}
\label{eq-local-trace-map}
	\tau_M^{\loc} : \Omega_M(\mathscr A) \simeq (\Omega^n\mathscr A)_M,
\end{equation}

We shall refer to \eqref{eq-local-trace-map} as the \emph{local trace map}.
\end{void}

\begin{void}[Construction of \eqref{eq-local-trace-map}]
We shall use a linear version of the functor \eqref{eq-functor-local-loop} (\emph{cf.}~the proof of \cite[Proposition 5.5.6.16]{lurie2017higher}).

Namely, for any stable $\infty$-category $\mathscr O$ admitting limits and colimits, there is a functor
\begin{equation}
\label{eq-functor-local-loop-linear}
\Omega_M : \Fun(\Sing M, \mathscr O) \rightarrow \Alg_{\mathbb E_M}(\mathscr O),
\end{equation}
where $\mathscr O$ is endowed with the Cartesian symmetric monoidal structure. For $\mathscr O := \integers\Mod$, the same-named functors \eqref{eq-functor-local-loop-linear} and \eqref{eq-functor-local-loop-calg} are related by the commutative square
$$
\begin{tikzcd}[column sep = 1.5em]
	\Fun(\Sing M, \integers\Mod) \ar[r, "\Omega_M"]\ar[d, "\eqref{eq-linear-map-forgetful-functor}"] & \Alg_{\mathbb E_M}(\integers\Mod) \ar[d, "\eqref{eq-linear-map-forgetful-functor}"] \\
	\Fun(\Sing M, \CAlg(\Spc)) \ar[r, "\Omega_M"] & \Alg_{\mathbb E_M}(\CAlg(\Spc))
\end{tikzcd}
$$

Since the symmetric monoidal structure on $\mathscr O$ is also co-Cartesian, \eqref{eq-functor-local-loop-linear} may be viewed as an endofunctor of $\Fun(\Sing M, \mathscr O)$ (\emph{cf.}~\cite[Proposition 2.4.3.9]{lurie2017higher}). It remains to identify \eqref{eq-functor-local-loop-linear} with $[-n]$ for $\mathscr O := \integers\Mod$, given an orientation of $M$.

The desired identification is obtained from the natural isomorphisms
\begin{align*}
\Gamma_c(U, \mathscr A) \simeq \Gamma_c(U, \integers[n]) &\otimes \mathscr A_x[-n]  \\
&\simeq \integers \otimes \mathscr A_x[-n] \simeq \mathscr A_x[-n]
\end{align*}
for any $x \in \Sing M$, with corresponding object $U\rightarrow M$ in $\deloop\Top(n)_{/M}$ (\emph{cf.}~Remark \ref{rem-tangent-classifier-via-yoneda}). Here, the identification $\Gamma_c(U, \integers[n]) \simeq \integers$ is provided by the orientation of $M$, which is evidently functorial in $U \rightarrow M$.
\end{void}

\begin{void}
In the context of \S\ref{void-local-trace-map}, we may apply factorization homology \eqref{eq-factorization-homology} to the local trace map \eqref{eq-local-trace-map}. This yields the integrated local trace map $\int_M\tau_M^{\loc}$.

Suppose that $\mathscr A$ is $n$-connective as a $\Spc_*$-valued functor. Then we may apply Lemma \ref{lem-factorization-homology-calg} and Proposition \ref{prop-poincare-duality} to obtain a commutative diagram in $\CAlg(\Spc)$:
\begin{equation}
\label{eq-local-trace-map-integral}
\begin{tikzcd}[column sep = 2em]
	\int_M \Omega_M(\mathscr A) \ar[r, "\int_M\tau_M^{\loc}"]\ar[d, "\eqref{eq-nonabelian-poincare-duality}"] & \int_M  (\Omega^n\mathscr A)_M \ar[d, "\eqref{eq-factorization-homology-calg}"] \\
	\Gamma_c(M, \mathscr A) \ar[r, "\simeq"] & \colim \Omega^n\mathscr A
\end{tikzcd}
\end{equation}
where all arrows are isomorphisms. Here, the fact that \eqref{eq-nonabelian-poincare-duality} lifts to an isomorphism in $\CAlg(\Spc)$ comes from the commutation of $\int_M\Omega_M(\cdot)$ and $\Gamma_c(M, \cdot)$ with finite products, by Lemma \ref{lem-functor-local-loop-finite-limits} and the commutation of sifted colimits with finite products in $\Spc$.
\end{void}

\begin{void}[Global trace map]
\label{void-global-trace-map}
Consider the constant functor $\mathscr A$ with values in $A[k] \in \integers\Mod$, for an abelian group $A$ and an integer $k\ge n$. In this case, we have a map in $\CAlg(\Spc)$:
\begin{equation}
\label{eq-homology-map-to-point}
\colim\Omega^n(\deloop^k A) \simeq \colim \deloop^{k-n} A \rightarrow \deloop^{k-n} A.
\end{equation}

We define the \emph{global trace map} to be the composition of the lower horizontal isomorphism in \eqref{eq-local-trace-map-integral} with \eqref{eq-homology-map-to-point}:
\begin{equation}
\label{eq-global-trace-map}
	\tau_M^{\glob} : \Gamma_c(M, \deloop^k A) \rightarrow \deloop^{k - n} A.
\end{equation}

From the commutative square \eqref{eq-local-trace-map-integral}, we deduce the following compatibility between the local and global trace maps:
\begin{equation}
\label{eq-global-trace-map-compatibility}
\begin{tikzcd}[column sep = 2em]
	\int_M \Omega_M(\deloop^k A) \ar[r, "\int_M\tau_M^{\loc}"]\ar[d, "\eqref{eq-nonabelian-poincare-duality}"] & \int_M (\deloop^{k - n}A)_M \ar[d, "\eqref{eq-homology-map-to-point}\circ\eqref{eq-factorization-homology-calg}"] \\
	\Gamma_c(M, \deloop^k A) \ar[r, "\tau_M^{\glob}"] & \deloop^{k-n}A
\end{tikzcd}
\end{equation}
\end{void}

\subsection{Coefficients}

\begin{void}
\label{void-dgcat}
Denote by $\PrL$ the $\infty$-category of presentable $\infty$-categories with colimit-preserving functors. It admits a symmetric monoidal structure given by the Lurie tensor product.

Denote by $\Vect$ the $\infty$-category of $H\complexes$-module spectra, viewed as a commutative algebra object in $\PrL$. Its module objects in $\PrL$ are called \emph{DG categories}:
$$
\DGCat := \Vect\Mod(\PrL).
$$

Being a module $\infty$-category, $\DGCat$ inherits a symmetric monoidal structure given by the tensor product relative to $\Vect$.
\end{void}

\begin{void}
Given an $\infty$-groupoid $Y$, we define the $\infty$-category of \emph{$\complexes$-local systems} over $Y$ to be the limit of the constant diagram
$$
\LS(Y) := \lim_Y \Vect.
$$

The assignment $Y\mapsto \LS(Y)$ organizes into a functor
\begin{equation}
\label{eq-local-system-contravariant}
	\Spc^{\opposite} \rightarrow \DGCat.
\end{equation}
We denote the image of a morphism $f : Y_1 \rightarrow Y_2$ under \eqref{eq-local-system-contravariant} by $f^{\dagger} : \LS(Y_2) \rightarrow \LS(Y_1)$.

In fact, the functor \eqref{eq-local-system-contravariant} is right adjoint to the functor $\DGCat \rightarrow \Spc^{\opposite}$ sending $\mathscr C$ to $\Maps_{\DGCat}(\mathscr C, \Vect)$. This implies that \eqref{eq-local-system-contravariant} preserves limits.
\end{void}

\begin{rem}
\label{rem-local-systems-as-sheaves}
For each $M \in \Mfd_n$, we may consider the DG category $\Shv_{\{0\}}(M)$ of sheaves of $\complexes$-vector spaces over $M$ whose cohomology sheaves are locally constant. Then there is a canonical equivalence of DG categories
$$
\Shv_{\{0\}}(M) \simeq \LS(\Sing M),
$$
according to \cite[Lemma A.4.2]{MR4368480}.
\end{rem}

\begin{void}
By \cite[\S1.4]{MR4368480}, the DG category $\LS(Y)$ is also \emph{covariantly} functorial in $Y \in \Spc$. More precisely, for each morphism $f : Y_1 \rightarrow Y_1$ in $\Spc$, the functor $f^{\dagger}$ admits a left adjoint $f_{\dagger}$. Thus, we obtain a functor
\begin{equation}
\label{eq-local-system-covariant}
\Spc \rightarrow \DGCat
\end{equation}
by passing to left adjoints in \eqref{eq-local-system-contravariant}.

Moreover, the canonical self-duality of $\Vect$ induces a canonical self-duality of $\LS(Y)$ for each $Y \in \Spc$. Under this self-duality, $f_{\dagger}$ is identified with the dual of $f^{\dagger}$ for every morphism $f$ in $\Spc$. Thus, \eqref{eq-local-system-covariant} may also be obtained from \eqref{eq-local-system-contravariant} by passing to duals and applying the canonical self-duality.
\end{void}

\begin{lem}
\label{lem-local-system-covariant-colimit-commutation}
The functor \eqref{eq-local-system-covariant} commutes with colimits.
\end{lem}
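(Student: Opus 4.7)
The plan is to express $\LS(Y)$ covariantly as a limit and invoke the standard Lurie duality identifying limits along right adjoints with colimits along left adjoints in $\PrL$ (and its module categories).

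Let $Y_\bullet : I \rightarrow \Spc$ be a small diagram with colimit $Y$. Since the contravariant $\LS^\dagger : \Spc^{\opposite} \rightarrow \DGCat$ preserves limits (as observed just before the lemma), applying it to the colimit decomposition of $Y$ yields
\begin{equation*}
\LS(Y) \;\simeq\; \lim_{i \in I^{\opposite}} \LS(Y_i),
\end{equation*}
with transition maps the pullbacks $f^\dagger$ for $f : Y_i \rightarrow Y_j$ in $I$. Each such $f^\dagger$ has the left adjoint $f_\dagger$, and the standard Lurie duality (\cite{lurie2017higher}) identifies this limit of right adjoints with the colimit of the corresponding diagram of left adjoints:
\begin{equation*}
\lim_{i \in I^{\opposite}} \LS(Y_i) \;\simeq\; \colim_{i \in I} \LS(Y_i),
\end{equation*}
the latter formed along the $f_\dagger$'s. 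Chaining these gives $\LS(Y) \simeq \colim_i \LS(Y_i)$, as required. Naturality ensures that this isomorphism is induced by the canonical structural maps of the functor \eqref{eq-local-system-covariant}: under Lurie duality, the legs of the limit cone correspond (via passage to left adjoints) precisely to the legs of the colimit cocone.

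The main point requiring verification is that the Lurie duality, originally formulated for $\PrL$, transports to the module category $\DGCat = \Vect\Mod(\PrL)$. I expect this to be the only nontrivial bookkeeping in the proof, resting on two facts: the forgetful functor $\DGCat \rightarrow \PrL$ creates limits, and the Lurie tensor product preserves colimits separately in each variable, so the $\Vect$-module structure is inherited automatically by colimits formed in $\PrL$. Given these, the passage to left adjoints that underlies Lurie duality can be performed entirely at the level of underlying presentable $\infty$-categories, and the resulting equivalences promote to $\DGCat$ without further work.
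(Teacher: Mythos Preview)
Your proof is correct and rests on the same underlying observation as the paper's, though the packaging differs slightly. The paper routes through symmetric monoidal duality in $\DGCat$: it cites an external lemma to the effect that $\colim_{i} \LS(Y_i)$ is dualizable with dual $\lim_{i} \LS(Y_i)$ (transition maps the pullbacks $f^{\dagger}$), identifies this limit with $\LS(\colim_i Y_i)$ via the limit-preservation of the contravariant $\LS$, and then invokes the canonical self-duality of $\LS(\colim_i Y_i)$ to conclude. You instead appeal directly to the standard $\PrL$-level identification of a limit along right adjoints with the colimit along their left adjoints, avoiding the symmetric monoidal dualizability detour altogether. Your route is arguably more elementary and self-contained; the paper's has the minor side benefit of recording the dualizability of $\colim_i \LS(Y_i)$, which can be useful in other contexts. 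Your remarks on why the adjoint-passage argument lifts from $\PrL$ to $\DGCat = \Vect\Mod(\PrL)$ are correct and constitute the only bookkeeping needed.
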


\begin{proof}
Given a diagram $I \rightarrow \Spc$, $i\mapsto Y_i$, the DG category $\colim_{i\in I} \LS(Y_i)$ is dualizable (\emph{cf.}~\cite[Lemma 1.4.8(d)]{MR4368480}). Moreover, its dual is canonically identified with $\lim_{i\in I} \LS(Y_i)$, with transition functors given by pullbacks.

The latter is identified wtih $\LS(\colim_{i\in I}Y_i)$ since \eqref{eq-local-system-contravariant} commutes with limits.
\end{proof}

\begin{void}
Note that \eqref{eq-local-system-contravariant} admits a lax symmetric monoidal structure given by the external tensor product construction.

For $Y_1, Y_2 \in \Spc$, the lax symmetric monoidal structure supplies a functor
\begin{equation}
\label{eq-local-system-tensor-product}
\LS(Y_1) \otimes \LS(Y_2) \rightarrow \LS(Y_1 \times Y_2),\quad \mathscr A_1, \mathscr A_2 \mapsto \mathscr A_1\boxtimes\mathscr A_2.
\end{equation}
\end{void}

\begin{lem}
\label{lem-local-system-kunneth-formula}
The functor \eqref{eq-local-system-contravariant} (hence \eqref{eq-local-system-covariant}) is symmetric monoidal.
\end{lem}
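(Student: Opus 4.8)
The plan is to prove that the external-tensor-product lax structure on \eqref{eq-local-system-contravariant} is in fact strong, i.e. that \eqref{eq-local-system-tensor-product} is an equivalence for all $Y_1, Y_2 \in \Spc$, and that the unit map $\Vect \to \LS(\mathrm{pt})$ is an equivalence (the latter being immediate, since $\LS(\mathrm{pt}) = \lim_{\mathrm{pt}}\Vect = \Vect$). So the whole content is the K\"unneth statement for $\LS$.

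First I would reduce to the case where one factor is a point, or more precisely, exploit that $\LS(-) = \lim_{(-)}\Vect$ sends colimits in $\Spc$ to limits in $\DGCat$, together with the fact that both sides of \eqref{eq-local-system-tensor-product}, as functors of $(Y_1, Y_2)$ into $\DGCat$, preserve colimits in each variable separately: the left-hand side does because $- \otimes \LS(Y_2)$ preserves colimits (the Lurie tensor product in $\PrL$, hence in $\DGCat$, is cocontinuous in each variable) and because $\LS(-)$ is cocontinuous by Lemma \ref{lem-local-system-covariant-colimit-commutation}; the right-hand side does because $\LS(-)$ is cocontinuous and $Y_1 \times - $ (resp. $-\times Y_2$) preserves colimits in $\Spc$. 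Every $\infty$-groupoid is a colimit of copies of the point, so by a double colimit argument it suffices to check that \eqref{eq-local-system-tensor-product} is an equivalence when $Y_1 = Y_2 = \mathrm{pt}$ — where it is the identity functor of $\Vect$, since $\LS(\mathrm{pt}) = \Vect$ and $\Vect\otimes\Vect \simeq \Vect$ (the unit being idempotent). Alternatively and perhaps more cleanly: fix $Y_2$; then both $Y_1 \mapsto \LS(Y_1)\otimes\LS(Y_2)$ and $Y_1\mapsto\LS(Y_1\times Y_2)$ are cocontinuous functors $\Spc \to \DGCat$, and a cocontinuous functor out of $\Spc$ is determined by its value on $\mathrm{pt}$; both send $\mathrm{pt}$ to $\LS(Y_2)$, and one checks the comparison map \eqref{eq-local-system-tensor-product} induces the identity there, whence it is an equivalence for all $Y_1$.

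To organize this into a genuine symmetric monoidal equivalence rather than a pointwise one, I would invoke that a lax symmetric monoidal functor whose underlying lax structure maps are all equivalences is symmetric monoidal; the unit and the binary constraints suffice by coherence. Then the parenthetical claim about \eqref{eq-local-system-covariant} follows formally: passing to left adjoints (equivalently, dualizing, using the self-duality of $\LS(Y)$ recorded earlier) is a symmetric monoidal operation, so \eqref{eq-local-system-covariant} inherits a symmetric monoidal structure from \eqref{eq-local-system-contravariant}; concretely $f_\dagger$ is the dual of $f^\dagger$ and $\boxtimes$ is self-dual under $\bigotimes_Y \LS(Y)^\vee \simeq \LS(Y)$, so the constraints for \eqref{eq-local-system-covariant} are the duals of those for \eqref{eq-local-system-contravariant} and are therefore equivalences as well.

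The main obstacle is the cocontinuity-in-each-variable bookkeeping on the two sides of \eqref{eq-local-system-tensor-product} together with checking that the comparison map really is the identity on the point — i.e. chasing through the definition of the lax structure (external tensor product) to see that the constraint $\LS(\mathrm{pt})\otimes\LS(Y_2) \to \LS(\mathrm{pt}\times Y_2)$ is the canonical equivalence $\Vect\otimes\LS(Y_2)\simeq\LS(Y_2)$. This is the kind of identification that is intuitively obvious but requires unwinding the construction of the lax structure in \cite{MR4368480}; I expect it to be essentially formal but the only place where one must be careful.
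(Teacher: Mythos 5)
Your proof takes a genuinely different route from the paper's. The paper's proof is short: it rewrites $\LS(Y_1\times Y_2)\simeq\lim_{Y_2}\LS(Y_1)$ (so that the comparison map becomes the canonical morphism $\mathscr C\otimes\LS(Y)\to\lim_Y\mathscr C$ for $\mathscr C=\LS(Y_1)$) and then cites \cite[Proposition 1.4.10]{MR4368480}, which asserts precisely that this morphism is an equivalence. Your approach instead reduces to the case $Y_1=Y_2=\mathrm{pt}$ by a colimit-descent argument using Lemma \ref{lem-local-system-covariant-colimit-commutation} and the universal property of $\Spc$ as the free cocomplete $\infty$-category on a point. The paper's route is shorter but reference-heavy; yours is more self-contained and re-derives the statement from first principles.

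However, there is a real gap in your argument that you should address: you need the comparison map \eqref{eq-local-system-tensor-product} to be a natural transformation between cocontinuous functors $\Spc\to\DGCat$ in the variable $Y_1$, i.e.\ natural with respect to the \emph{covariant} functoriality $f_\dagger$. But the lax monoidal structure only furnishes naturality with respect to $f^\dagger$ (the contravariant variance). Passing from the $f^\dagger$-naturality square to the $f_\dagger$-naturality square is not a formal transposition of adjoints; it is a base-change/projection-formula statement: $(f\times\id)_\dagger(\mathrm{pr}_1^\dagger\mathscr A_1\otimes\mathrm{pr}_2^\dagger\mathscr A_2)\simeq\mathrm{pr}_1^\dagger(f_\dagger\mathscr A_1)\otimes\mathrm{pr}_2^\dagger\mathscr A_2$. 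This is true in the present context (it can be deduced via the self-duality of $\LS(Y)$ and the fact that $f_\dagger$ is the dual of $f^\dagger$, as you indicate at the end), but it is the crux of the argument and needs to be stated and justified rather than folded into ``cocontinuity bookkeeping.'' Once you have that, the remaining check that the comparison map is the canonical equivalence on $\mathrm{pt}$ is, as you say, formal; and your deduction of the statement for \eqref{eq-local-system-covariant} by passing to duals matches the paper.
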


\begin{proof}
It suffices to prove that \eqref{eq-local-system-tensor-product} is an equivalence.

Under the canonical identifications
$$
\LS(Y_1\times Y_2) \simeq \LS(\lim_{Y_2} Y_1) \simeq \lim_{Y_2} \LS(Y_1),
$$
the functor \eqref{eq-local-system-tensor-product} corresponds to \cite[(1.17)]{MR4368480} for $\mathscr C := \LS(Y_1)$ and $Y := Y_2$. Thus the result follows from \cite[Proposition 1.4.10]{MR4368480}.
\end{proof}

\begin{void}[Tautological character]
Finally, we shall define a categorical analogue of the tautological character local system on the classifying space of $\complexes^{\times}$.

Let us view $\complexes^{\times}$ as a discrete abelian group and its deloop $\deloop \complexes^{\times}$ as a commutative algebra in $\Spc$. Since \eqref{eq-local-system-contravariant} is symmetric monoidal (\emph{cf.}~Lemma \ref{lem-local-system-kunneth-formula}), the tautological character local system on $\deloop\complexes^{\times}$ may be viewed as a morphism $\Vect \rightarrow \LS(\deloop\complexes^{\times})$ of cocommutative coalgebras in $\DGCat$.

Dualizing, we obtain a morphism in $\CAlg(\DGCat)$:
\begin{equation}
\label{eq-tautological-character-categorical}
\chi : \LS(\deloop\complexes^{\times}) \rightarrow \Vect,
\end{equation}
which we call the \emph{(categorical) tautological character}.
\end{void}

\section{Local constructions}

In this section, we define the ``quantum torus", or rather its category of representations $\Rep_q(\check T)$ as an $\mathbb E_M$-algebra in $\DGCat$, for an oriented $2$-manifold $M$ (\emph{cf.}~\S\ref{sec-quantum-torus-definition}). The definition is simple, but not quite explicit. In \S\ref{sec-t-structure}-\ref{sec-ribbon-twist}, we make $\Rep_q(\check T)$ more explicit by showing that it is the derived $\infty$-category of its heart $\Rep_q(\check T)^{\heartsuit}$, which may be viewed as a \emph{twisted} family of braided monoidal categories over $M$, and computing its local invariants.

The material of \S\ref{sec-t-structure} and \S\ref{sec-ribbon-twist} will \emph{not} be used later, so the reader only is only interested in the factorization homology of $\Rep_q(\check T)$ may safely skip them.

\subsection{Betti levels}

\begin{void}
\label{void-scheme-to-homotopy-type}
Given a $\complexes$-scheme $Y$ of finite type, we write $Y^{\topology}$ for the topological space underlying the analytification of $Y$ and $\Sing Y^{\topology}$ for its homotopy type (\emph{cf.}~\S\ref{void-category-of-manifolds}).

The association $Y \mapsto \Sing Y^{\topology}$ determines a functor
\begin{equation}
\label{eq-scheme-to-homotopy-type}
\Sing(\cdot)^{\topology} : \Sch_{\ft} \rightarrow \Spc,
\end{equation}
where $\Sch_{\ft}$ denotes the category of $\complexes$-schemes of finite type.
\end{void}

\begin{rem}
\label{rem-scheme-to-homotopy-type-fiber-product}
The functor \eqref{eq-scheme-to-homotopy-type} does \emph{not} commute with finite limits. More precisely, the functor $Y \mapsto Y^{\topology}$ from $\Sch_{\ft}$ to the category of topological spaces commutes with finite limits (\emph{cf.}~\cite[Expos\'e XII, \S1.2]{MR2017446}), but the functor $\Sing$ does not.

However, given morphisms $Y_1 \rightarrow Y \leftarrow Y_2$ in $\Sch_{\ft}$ where $Y_1 \rightarrow Y$ induces a Serre fibration $(Y_1)^{\topology} \rightarrow Y^{\topology}$, then the natural map in $\Spc$ is an isomorphism:
$$
\Sing (Y_1\times_Y Y_2)^{\topology} \simeq \Sing (Y_1)^{\topology} \times_{\Sing Y^{\topology}} \Sing(Y_2)^{\topology}.
$$
\end{rem}

\begin{void}
\label{void-level-general}
Let $X$ be a smooth $\complexes$-curve and $G$ be a reductive group $X$-scheme. We shall define the $\infty$-groupoid $\Level(G)$ of ``Betti levels" of $G$ in this context.

Since $G^{\topology} \rightarrow X^{\topology}$ is a Serre fibration, $\Sing G^{\topology} \rightarrow \Sing X^{\topology}$ inherits a group structure from $G$ (\emph{cf.}~Remark \ref{rem-scheme-to-homotopy-type-fiber-product}). We view $\deloop \Sing G^{\topology}$ as an $\infty$-groupoid over $\Sing X^{\topology}$, equipped with a neutral section $\Sing X^{\topology} \rightarrow \deloop\Sing G^{\topology}$.

Define the $\infty$-groupoid $\Level(G)$ of \emph{Betti levels} of $G$ to be:
\begin{equation}
\label{eq-betti-level-construction}
\Level(G) := \Maps_{\Sing X^{\topology}/}(\deloop\Sing G^{\topology}, \deloop^4\complexes^{\times}),
\end{equation}
where $\deloop^4\complexes^{\times}$ is viewed as an $\infty$-groupoid under $\Sing X^{\topology}$ via the neutral point.
\end{void}

\begin{rem}
Denote by $K(G^{\topology}, 1)$ the classifying space of $G^{\topology}$ relative to $X^{\topology}$.

The definition of $\Level(G)$ renders it a ``categorification" of the reduced cohomology group $H^4_*(K(G^{\topology}, 1), \complexes^{\times})$. Namely, for each $n\ge 0$, we have
$$
\pi_n \Level(G) \simeq H^{4-n}_*(K(G^{\topology}, 1), \complexes^{\times}).
$$

This definition of $\Level(G)$ is inspired by the analogous notions in the \'etale and de Rham cohomological contexts (\emph{cf.}~\cite{MR3769731}, \cite{zhao2022metaplectic}).

To our knowledge, the first authors to suggest that $H^4_*(K(G^{\topology}, 1), \complexes^{\times})$ plays the role of levels for quantum groups are Dijkgraaf and Witten (\emph{cf.}~\cite{MR1048699}).
\end{rem}

\begin{void}
\label{void-exponential-exact-sequence}
Let us now specialize to the case where $G = T$ is an $X$-torus. Denote by $\Lambda$ the smooth $X$-scheme representing the sheaf of cocharacters of $T$.

There is a short exact sequence of topological groups relative to $X^{\topology}$:
\begin{equation}
\label{eq-exponential-exact-sequence}
0 \rightarrow \Lambda^{\topology} \rightarrow \Lie(T)^{\topology} \xrightarrow{\exp} T^{\topology} \rightarrow 1,
\end{equation}
where $\Lie(T)$ denotes the Lie algebra of $T$ and $\exp$ the exponential map. The short exact sequence \eqref{eq-exponential-exact-sequence} identifies $\Sing T^{\topology}$ with $\deloop\Sing \Lambda^{\topology}$. In particular, we obtain
\begin{equation}
\label{eq-level-torus-as-mapping-space}
\Level(T) \simeq \Maps_{\Sing X^{\topology}/}(\deloop^2\Sing\Lambda^{\topology}, \deloop^4\complexes^{\times}).
\end{equation}
\end{void}

\begin{rem}
\label{rem-level-for-torus-as-mapping-space}
The locally constant sheaf of abelian groups $\Lambda^{\topology}$ corresponds to a functor
\begin{equation}
\label{eq-cocharacter-lattice-functor}
\Sing X^{\topology} \rightarrow \integers\Mod.
\end{equation}
To each $x \in \Sing X^{\topology}$, \eqref{eq-cocharacter-lattice-functor} assigns a finite free $\integers$-module, which we view as the fiber of $\Lambda$ at $x$. (This holds literally when $x$ is defined by a $\complexes$-point of $X$.)

Note that the double deloop of \eqref{eq-cocharacter-lattice-functor} classifies $\deloop^2\Sing \Lambda^{\topology}$ in the following sense: Its composition with the forgetful functor $\integers\Mod \rightarrow \Spc$ corresponds to $\deloop^2\Sing\Lambda^{\topology}$ under unstraightening. In particular, $\Level(T)$ is equivalent to the mapping space from the double deloop of \eqref{eq-cocharacter-lattice-functor} to $\deloop^4\complexes^{\times}$ in the $\infty$-category $\Fun(\Sing X^{\topology}, \Spc_*)$.
\end{rem}

\begin{void}[Topological context]
\label{void-topological-context}
In view of Remark \ref{rem-level-for-torus-as-mapping-space}, we find it convenient to change our context from algebra to topology.

Namely, we shall fix an \emph{oriented} $2$-manifold $M$ and a functor $\Lambda : \Sing M \rightarrow \integers\Mod$ valued in finite free $\integers$-modules. By a ``Betti level", we shall mean a morphism
$$
q : \deloop^2\Lambda \rightarrow \deloop^4\complexes^{\times}
$$
in the $\infty$-category $\Fun(\Sing M, \Spc_*)$.

Applications to the algebraic context (\emph{cf.}~\S\ref{void-level-general}) will be obtained by setting $M := X^{\topology}$, $\Lambda :=$ the functor \eqref{eq-cocharacter-lattice-functor}, and $q$ a Betti level in the sense of \S\ref{void-level-general}.
\end{void}

\begin{void}[Digression: cohomology of $\deloop^2\Gamma$]
Let $\Gamma$ be a finite free $\integers$-module.

Write $\Maps_*(\deloop^2\Gamma, \deloop^4\complexes^{\times})$ for the mapping space in $\Spc_*$ and $\Maps_{\integers}(\deloop^2\Gamma, \deloop^4\complexes^{\times})$ for the mapping space in $\integers\Mod$. Write $\Quad(\Gamma, \complexes^{\times})$ for the abelian group
$$
\Quad(\Gamma, \complexes^{\times}) := \Sym^2(\check{\Gamma}) \otimes_{\integers} \complexes^{\times}.
$$
Its elements can be viewed as $\complexes^{\times}$-valued quadratic forms on $\Gamma$: Given $c\otimes \zeta \in (\check{\Gamma})^{\otimes 2}\otimes_{\integers}\complexes^{\times}$, we obtain a quadratic form $\Gamma \rightarrow \complexes^{\times}$ by the expression $\gamma \mapsto \zeta^{c(\gamma, \gamma)}$, and this expression depends only on the symmetrization of $c$.

The $\infty$-groupoid $\Maps_*(\deloop^2\Gamma, \deloop^4\complexes^{\times})$ has nontrivial homotopy groups in degrees $0$ and $2$. Its postnikov tower may be identified explicitly as follows:
\begin{equation}
\label{eq-lattice-deloop-cohomology}
\begin{tikzcd}[column sep = 1em]
	\deloop^2 \pi_2 \Maps_*(\deloop^2 \Gamma, \deloop^4 \complexes^{\times}) \ar[r]\ar[d, "\simeq"] & \Maps_*(\deloop^2\Gamma, \deloop^4\complexes^{\times}) \ar[r]\ar[d, "\simeq"] & \pi_0\Maps_*(\deloop^2\Gamma, \deloop^4\complexes^{\times}) \ar[d, "\simeq"] \\
	\Maps_{\integers}(\deloop^2\Gamma, \deloop^4\complexes^{\times}) \ar[r, "\alpha"] & \Maps_*(\deloop^2\Gamma, \deloop^4\complexes^{\times}) \ar[r, "\beta"] & \Quad(\Gamma, \complexes^{\times})
\end{tikzcd}
\end{equation}
where $\alpha$ coincides with the map induced by the forgetful functor $\integers\Mod \rightarrow \Spc_*$.
\end{void}

\begin{rem}
\label{rem-bisector-splitting}
The map $\beta$ in \eqref{eq-lattice-deloop-cohomology} admits a splitting over the abelian group of $\complexes^{\times}$-valued \emph{bilinear forms} on $\Gamma$:
\begin{equation}
\label{eq-bilinear-form-splitting}
\begin{tikzcd}[column sep = 1em]
	& (\check{\Gamma})^{\otimes 2} \otimes_{\integers} \complexes^{\times} \ar[d, "{c\otimes\zeta \mapsto (\gamma\mapsto \zeta^{c(\gamma, \gamma)})}"]\ar[dl, dotted] \\
	\Maps_*(\deloop^2\Gamma, \deloop^4\complexes^{\times}) \ar[r, "\beta"] & \Quad(\Gamma, \complexes^{\times})
\end{tikzcd}
\end{equation}

Namely, there is a canonical $\integers$-linear map given by cup product
$$
\check{\Gamma}^{\otimes 2} \rightarrow \Maps_*(\deloop^2\Gamma, \deloop^4\integers),\quad y\otimes z \mapsto (\deloop^2 y) \cup (\deloop^2 z),
$$
which defines the dotted arrow in \eqref{eq-bilinear-form-splitting} by tensoring with $\complexes^{\times}$.
\end{rem}

\begin{void}[Quadratic form]
\label{void-betti-level-quadratic-form}
In the context of \S\ref{void-topological-context}, the fiber sequence \eqref{eq-lattice-deloop-cohomology} gives rise to fiber sequences functorial in $x \in \Sing M$:
$$
\Maps_{\integers}(\deloop^2\Lambda_x, \deloop^4\complexes^{\times}) \rightarrow \Maps_*(\deloop^2\Lambda_x, \deloop^4\complexes^{\times}) \rightarrow \Quad(\Lambda_x, \complexes^{\times}),
$$
where $\Lambda_x$ denote the image of $x$ under $\Lambda$.

Taking limit over $\Sing M$, we obtain a fiber sequence
\begin{equation}
\label{eq-level-exact-sequence}
\Maps_{\integers}(\deloop^2\Lambda, \deloop^4\complexes^{\times}) \rightarrow \Maps_*(\deloop^2\Lambda, \deloop^4\complexes^{\times}) \rightarrow \Quad(\Lambda, \complexes^{\times})
\end{equation}
where the first term denotes the mapping space in $\Fun(\Sing M, \integers\Mod)$, the second term the mapping space in $\Fun(\Sing M, \Spc_*)$, and $\Quad(\Lambda, \complexes^{\times})$ consists of locally constant $\complexes^{\times}$-valued quadratic form on $\Lambda$.

In particular, every Betti level $q$ induces a locally constant $\complexes^{\times}$-valued quadratic form $Q$ on $\Lambda$ via the second map of \eqref{eq-level-exact-sequence}. We call $Q$ the \emph{associated quadratic form} of $q$. The fiber sequence \eqref{eq-level-exact-sequence} shows that $Q$ is the obstruction of $q$ to be $\integers$-linear.
\end{void}

\begin{void}[Symmetric form]
\label{void-betti-level-symmetric-form}
To each $\complexes^{\times}$-valued quadratic form $Q$ on a finite free $\integers$-module $\Gamma$, we may associate the symmetric bilinear form
$$
b : \Gamma\otimes \Gamma \rightarrow \complexes^{\times},\quad \gamma_1, \gamma_2\mapsto Q(\gamma_1 + \gamma_2) Q(\gamma_1)^{-1} Q(\gamma_2)^{-1}.
$$
The symmetric form $b$ vanishes if and only if $Q$ is linear. When this happens, $Q$ must take values in $\{\pm 1\}$.

Thus, every Betti level $q$ of $T$ also has an associated symmetric form $b : \Lambda \otimes \Lambda \rightarrow \complexes^{\times}$. We shall see that $b$ is the obstruction of $q$ to be commutative, \emph{i.e.}~$\mathbb E_{\infty}$-monoidal.

Indeed, write $\Maps_{\mathbb E_{\infty}}(\deloop^2 \Lambda, \deloop^4\complexes^{\times})$ for the mapping space in $\Fun(\Sing M, \CAlg(\Spc))$. The forgetful functor defines a map of $\infty$-groupoids
\begin{equation}
\label{eq-betti-level-symmetric-monoidal}
\Maps_{\mathbb E_{\infty}}(\deloop^2 \Lambda, \deloop^4\complexes^{\times}) \rightarrow \Maps_*(\deloop^2 \Lambda, \deloop^4\complexes^{\times})
\end{equation}
\end{void}

\begin{lem}
\label{lem-betti-level-symmetric-monoidal}
\begin{enumerate}
	\item The map \eqref{eq-betti-level-symmetric-monoidal} is fully faithful;
	\item The essential image of \eqref{eq-betti-level-symmetric-monoidal} consists precisely of Betti levels $q$ whose symmetric forms $b$ vanish.
\end{enumerate}
\end{lem}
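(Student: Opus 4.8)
The statement is really about the fiber sequence that computes the difference between $\mathbb E_\infty$-maps and pointed maps out of $\deloop^2\Lambda$. The plan is to reduce to a fiberwise statement over $\Sing M$ and then to an instrinsic computation about a single finite free $\integers$-module $\Gamma$, where everything is governed by the Postnikov tower \eqref{eq-lattice-deloop-cohomology}. Since $\Maps_{\mathbb E_\infty}(\deloop^2\Lambda,\deloop^4\complexes^\times)$ and $\Maps_*(\deloop^2\Lambda,\deloop^4\complexes^\times)$ are both obtained as limits over $\Sing M$ of the corresponding mapping spaces for the fibers $\Lambda_x$, and since limits of fully faithful maps of $\infty$-groupoids are fully faithful, part (1) will follow once I show that for a single $\Gamma$ the forgetful map
$$
\Maps_{\mathbb E_\infty}(\deloop^2\Gamma,\deloop^4\complexes^\times)\rightarrow\Maps_*(\deloop^2\Gamma,\deloop^4\complexes^\times)
$$
is fully faithful. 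For part (2) I will similarly identify the essential image fiberwise and then observe that a section of $q$ lifts to $\Maps_{\mathbb E_\infty}$ over all of $\Sing M$ iff its associated quadratic form is fiberwise linear, which by \S\ref{void-betti-level-symmetric-form} is exactly the vanishing of $b$; the compatibility of the fiber sequences in $x$ (established in \S\ref{void-betti-level-quadratic-form}) makes the passage to the limit automatic.

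For the single-lattice computation I would argue as follows. The target $\deloop^4\complexes^\times$ is an infinite loop space, so $\Maps_*(\deloop^2\Gamma,\deloop^4\complexes^\times)$ already carries a canonical $\mathbb E_\infty$-structure (pointwise addition in the target), and the forgetful map in question compares this with the $\mathbb E_\infty$-structure coming from viewing $\deloop^2\Gamma$ as an $\mathbb E_\infty$-coalgebra. Concretely, $\Maps_{\mathbb E_\infty}(\deloop^2\Gamma,\deloop^4\complexes^\times)$ is the mapping space in spectra $\Maps_{\Sptr}(\Sigma^\infty_+\deloop^2\Gamma\,/\,\text{unit},\, H\complexes^\times[4])$, i.e.\ it is a product of Eilenberg--MacLane pieces indexed by the stable cohomology of $\deloop^2\Gamma$, whereas the pointed mapping space sees the unstable cohomology. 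On $\pi_0$, both map to $H^4_*(\cdot,\complexes^\times)$; the stable $H^4$ of $\deloop^2\Gamma$ is the linear part $\Maps_\integers(\deloop^2\Gamma,\deloop^4\complexes^\times)$ together with the cup-product contribution $\check\Gamma^{\otimes 2}\otimes\complexes^\times$ which lands in $\Quad(\Gamma,\complexes^\times)$ via the symmetrized bilinear form, exactly the dotted splitting of Remark \ref{rem-bisector-splitting}. The image of $\pi_0$ therefore consists of those quadratic forms that come from \emph{bilinear} forms; but a $\complexes^\times$-valued quadratic form $Q$ is induced by a bilinear form iff its associated symmetric form $b$ is trivial and $Q$ is then $2$-torsion, i.e.\ iff $b$ vanishes — which is precisely clause (2). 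On higher homotopy groups ($\pi_2$), both sides are $\Maps_\integers(\deloop^2\Gamma,\deloop^4\complexes^\times)$ and the map is the identity, using the left-hand identification of \eqref{eq-lattice-deloop-cohomology}; this gives full faithfulness in clause (1).

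The step I expect to be the main obstacle is the bookkeeping in the single-lattice computation: pinning down precisely which unstable cohomology classes of $\deloop^2\Gamma$ in degree $4$ are stable (equivalently, primitive), and matching the cup-product term with the symmetrization map $\Sym^2\check\Gamma\to$ quadratic forms, so as to conclude that "$Q$ bilinear" $\iff$ "$b=0$". I would handle this by reducing to $\Gamma=\integers$ (and finite sums thereof) where $\deloop^2\integers=K(\integers,2)$ has well-known cohomology $H^*(K(\integers,2);\integers)$ — a polynomial algebra on the degree-$2$ class — so $H^4$ is free of rank one on the square of the fundamental class, and its image in quadratic forms is the "double" quadratic form $n\mapsto\zeta^{n^2}$, whose symmetric form is $\zeta^2$; this vanishes iff $\zeta$ is a square root of unity, reproducing the "$Q$ takes values in $\{\pm1\}$" remark. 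Extending multilinearly over a basis, using the Künneth formula and the fact that $\complexes^\times$ is divisible (so $\otimes_\integers\complexes^\times$ is exact), gives the general $\Gamma$, and then the limit over $\Sing M$ — together with the already-established naturality — finishes both clauses.
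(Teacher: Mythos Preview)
The paper does not prove this lemma; it simply cites \cite[Remark~4.6.7]{MR3769731}. Your reduction to a fiberwise statement, and then to a single lattice $\Gamma$, is the right shape for an actual argument. The execution, however, contains a genuine error.

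Your identification
\[
\Maps_{\mathbb E_\infty}(\deloop^2\Gamma,\deloop^4\complexes^\times)\;\simeq\;\Maps_{\Sptr}\bigl(\Sigma^\infty_+\deloop^2\Gamma/\text{unit},\,H\complexes^\times[4]\bigr)
\]
is wrong: by the $(\Sigma^\infty,\Omega^\infty)$ adjunction, maps out of the reduced suspension spectrum $\Sigma^\infty\deloop^2\Gamma$ compute the \emph{pointed} mapping space $\Maps_*$, not the $\mathbb E_\infty$ one. Since both sides are grouplike $\mathbb E_\infty$-spaces, the correct identification is with maps of connective spectra $\Maps_{\Sptr}(H\Gamma[2],H\complexes^\times[4])$, and the forgetful map to $\Maps_*$ is induced by the counit $\Sigma^\infty\deloop^2\Gamma\to H\Gamma[2]$. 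This distinction matters for everything downstream. In particular, the cup-product classes $(\deloop^2 y)\cup(\deloop^2 z)$ from Remark~\ref{rem-bisector-splitting} are \emph{not} $\mathbb E_\infty$ maps---the cup product $K(\integers,2)\times K(\integers,2)\to K(\integers,4)$ is bilinear, not additive---so your description of $\pi_0\Maps_{\mathbb E_\infty}$ as ``linear part together with the cup-product contribution $\check\Gamma^{\otimes 2}\otimes\complexes^\times$'' cannot be right.

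Relatedly, the assertion ``a $\complexes^\times$-valued quadratic form $Q$ is induced by a bilinear form iff its associated symmetric form $b$ is trivial'' is false: since $\complexes^\times$ is $2$-divisible, the map $(\check\Gamma)^{\otimes 2}\otimes\complexes^\times\to\Sym^2(\check\Gamma)\otimes\complexes^\times=\Quad(\Gamma,\complexes^\times)$ is surjective, so \emph{every} $Q$ lifts. Your closing $\Gamma=\integers$ paragraph correctly identifies $\{\pm 1\}\subset\complexes^\times$ as the locus $b=0$, but nothing in your argument shows this is the image of $\pi_0\Maps_{\mathbb E_\infty}$. What is actually needed is the computation $\pi_0\Maps_{\Sptr}(H\integers,H\complexes^\times[2])\cong\integers/2$ (coming from a stable operation of the form $\beta\,\mathrm{Sq}^2\rho$), together with the verification that it maps to $\{\pm 1\}\subset\Quad(\integers,\complexes^\times)$ under the forgetful map.
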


\begin{proof}
This is \cite[Remark 4.6.7]{MR3769731}.
\end{proof}

\subsection{The $\mathbb E_M$-category $\Rep_q(\check T)$}
\label{sec-quantum-torus-definition}

\begin{void}
We remain in the context of \S\ref{void-topological-context} and fix a Betti level
\begin{equation}
\label{eq-torus-betti-level}
q : \deloop^2 \Lambda \rightarrow \deloop^4\complexes^{\times}.
\end{equation}

In this subsection, we construct the \emph{quantum torus} as an $\mathbb E_M$-algebra (\emph{cf.}~\S\ref{void-em-algebras}) in the symmetric monoidal $\infty$-category $\DGCat$ (\emph{cf.}~\S\ref{void-dgcat}):
\begin{equation}
\label{eq-quantum-torus}
\Rep_q(\check T) \in \Alg_{\mathbb E_M}(\DGCat).
\end{equation}
\end{void}

\begin{void}
We first recall a special case of a construction in \ref{void-factorization-homology-calg}: The equivalence \eqref{eq-em-commutative-algebra}, when restricted to \emph{constant} functors, yields a functor
\begin{equation}
\label{eq-calg-to-ex-algebra}
(\cdot)_M : \CAlg(\mathscr O) \rightarrow \CAlg(\Alg_{\mathbb E_M}(\mathscr O))
\end{equation}
for any symmetric monoidal $\infty$-category $\mathscr O$. (We identified the target with $\Alg_{\mathbb E_M}(\CAlg(\mathscr O))$, \emph{cf.}~Remark \ref{rem-functor-local-loop-calg}.)

The functor \eqref{eq-calg-to-ex-algebra} is easy to describe: It is the composition of the canonical equivalence $\CAlg(\mathscr O)\simeq\CAlg(\CAlg(\mathscr O))$ (\emph{cf.}~\cite[Example 3.2.4.5]{lurie2017higher}) with the forgetful functor.
\end{void}

\begin{void}[Construction of \eqref{eq-quantum-torus}]
\label{void-quantum-torus-construction}
Denote by $\mathscr H_q$ the fiber of \eqref{eq-torus-betti-level}, viewed as a functor $\Sing M \rightarrow \Spc_*$ equipped with an action of $\deloop^3\complexes^{\times} \in \CAlg(\Spc)$.

Applying the functor \eqref{eq-functor-local-loop}, we obtain
$$
\mathscr H^{\loc}_q := \Omega_M(\mathscr H_q) \in \Alg_{\mathbb E_M}(\Spc)
$$
equipped with an action of the commutative algebra (\emph{cf.}~Remark \ref{rem-functor-local-loop-calg})
$$
\Omega_M(\deloop^3\complexes^{\times}) \in \CAlg(\Alg_{\mathbb E_M}(\Spc)).
$$

We shall identify $\Omega_M(\deloop^3\complexes^{\times})$ with $(\deloop \complexes^{\times})_M$ via the local trace map \eqref{eq-local-trace-map}. (This identification depends on the orientation of $M$.)

On the other hand, the tautological character \eqref{eq-tautological-character-categorical} induces a morphism
\begin{equation}
\label{eq-tautological-character-ex}
\LS((\deloop\complexes^{\times})_M) \simeq \LS((\deloop\complexes^{\times}))_M\xrightarrow{\chi_M} \Vect_M
\end{equation}
in $\CAlg(\Alg_{\mathbb E_M}(\DGCat))$, where the isomorphism uses the symmetric monoidal structure on $\LS$ (\emph{cf.}~Lemma \ref{lem-local-system-kunneth-formula}) and the second map uses functoriality of \eqref{eq-calg-to-ex-algebra}.

We define the $\mathbb E_M$-algebra in $\DGCat$:
\begin{equation}
\label{eq-quantum-torus-definition}
\Rep_q(\check T) := \LS(\mathscr H_q^{\loc}) \otimes_{\LS((\deloop\complexes^{\times})_M)} \Vect_M
\end{equation}
where $\LS(\mathscr H_q^{\loc})$ is acted on by $\LS((\deloop\complexes^{\times})_M)$ via the $(\deloop\complexes^{\times})_M$-action on $\mathscr H_q^{\loc}$, and $\Vect_M$ is acted on by $\LS((\deloop\complexes^{\times})_M)$ via the tautological character \eqref{eq-tautological-character-ex}.
\end{void}

\begin{rem}
Suppose that $q$ is trivial. Then we have $\mathscr H_q \simeq \deloop^3\complexes^{\times} \times \deloop^2\Lambda$. In particular, $\mathscr H_q$ lifts to a functor $\Sing M \rightarrow \CAlg(\Spc)$. Applying $\Omega_M$ and using the local trace map \eqref{eq-local-trace-map}, we obtain an identification
$$
\mathscr H_q^{\loc} \simeq (\deloop\complexes^{\times})_M \times \Lambda_M.
$$

This shows that $\Rep_q(\check T)$ is identified with $\LS(\Lambda)_M$, \emph{i.e.}~the $\mathbb E_M$-algebra associated to the functor $\LS(\Lambda) : \Sing M \rightarrow \CAlg(\DGCat)$ under \eqref{eq-em-commutative-algebra}. The latter corresponds to the locally constant sheaf $\Rep(\check T)$ of representations of the torus $\check T$ with sheaf of \emph{characters} $\Lambda$.
\end{rem}

\begin{rem}
\label{rem-quantum-torus-limit-presentation}
The formula \eqref{eq-quantum-torus-definition} expresses $\Rep_q(\check T)$ as the geometric realization of the Bar complex associated to the $\LS((\deloop\complexes^{\times})_M)$-modules $\LS(\mathscr H_q^{\loc})$ and $\Vect_M$.

By passing to right adjoints, we may also realize $\Rep_q(\check T)$ as the totalization of the co-Bar complex associated to the $\LS((\deloop\complexes^{\times})_M)$-comodules $\LS(\mathscr H_q^{\loc})$ and $\Vect_M$, defined using the pullback functoriality \eqref{eq-local-system-contravariant} of local systems:
\begin{align}
\notag
	\Rep_q(\check T) &\simeq \colim \mathrm{Bar}_{\LS((\deloop\complexes^{\times})_M)} (\LS(\mathscr H_q^{\loc}), \Vect_M) \\
\label{eq-quantum-torus-limit-presentation}
	&\simeq \lim \coBar_{\LS((\deloop\complexes^{\times})_M)} (\LS(\mathscr H_q^{\loc}), \Vect_M).
\end{align}
\end{rem}

\begin{void}[Fibers of $\Rep_q(\check T)$]
\label{void-quantum-torus-fiber}
The limit presentation \eqref{eq-quantum-torus-limit-presentation} gives a concrete description of the fiber of $\Rep_q(\check T)$ at any $x \in \Sing M$.

Indeed, the fiber of $\mathscr H_q^{\loc}$ at $x$ is a $\complexes^{\times}$-gerbe over the set $\Lambda_x$:
$$
\begin{tikzcd}
	\mathscr H_{q, x}^{\loc} \ar[d, "\deloop\complexes^{\times}"] \\
	\Lambda_x
\end{tikzcd}
$$
The expression \eqref{eq-quantum-torus-limit-presentation} identifies $\Rep_q(\check T)_x$ as the DG category of local systems on $\mathscr H_{q, x}^{\loc}$ which are $\deloop\complexes^{\times}$-equivariant against the tautological character local system.

In particular, $\Rep_q(\check T)_x$ admits a $\Lambda_x$-grading determined by the support:
$$
\Rep_q(\check T)_x \simeq \bigoplus_{\lambda \in \Lambda_x} \Rep_q(\check T)_x^{\lambda},
$$
where each summand $\Rep_q(\check T)_x^{\lambda}$ is \emph{non-canonically} equivalent to $\Vect$.
\end{void}

\begin{rem}
Denote by $b$ the symmetric form associated to $q$ (\emph{cf.}~\S\ref{void-betti-level-symmetric-form}).

When $b$ vanishes, $q$ lifts to a morphism in $\Fun(\Sing M, \CAlg(\Spc))$ (\emph{cf.}~Lemma \ref{lem-betti-level-symmetric-monoidal}). This equips $\mathscr H_q^{\loc}$ with the structure of a commutative algebra in $\Alg_{\mathbb E_M}(\Spc)$, so $\Rep_q(\check T)$ also lifts to a commutative algebra in $\Alg_{\mathbb E_M}(\DGCat)$.

In \S\ref{sec-ribbon-twist}, we shall establish a converse to this assertion: When $b$ is nonvanishing at a point $x \in \Sing M$, the fiber $\Rep_q(\check T)_x$ does \emph{not} lift to a commutative algebra.
\end{rem}

\subsection{$t$-structure}
\label{sec-t-structure}

\begin{void}
We remain in the context of \S\ref{void-topological-context}.

In this subsection, we shall explain in what sense $\Rep_q(\check T)$ is the derived category of its heart compatibly with the $\mathbb E_M$-algebra structure.

Denote by $\Vect^{\le 0} \subset \Vect$ the full subcategory of connective objects. Denote by $\Vect^{\heartsuit}$ the heart of $\Vect^{\le 0}$, \emph{i.e.}~the abelian category of $\complexes$-vector spaces.
\end{void}

\begin{void}
The $\infty$-categories $\Vect^{\le 0}$ and $\Vect^{\heartsuit}$ inherit symmetric monoidal structures from $\Vect$, so we may view them as objects of $\CAlg(\PrL)$.

Consider the morphisms in $\CAlg(\PrL)$:
$$
\Vect^{\heartsuit} \leftarrow \Vect^{\le 0} \rightarrow \Vect.
$$
They induce symmetric monoidal functors
\begin{align}
\label{eq-discrete-object-functor}
	\Vect^{\le 0}\Mod(\PrL) \rightarrow \Vect^{\heartsuit}\Mod(\PrL), \quad& \mathscr C\mapsto\mathscr C\otimes_{\Vect^{\le 0}}\Vect^{\heartsuit}, \\
\label{eq-spectrum-object-functor}
	\Vect^{\le 0}\Mod(\PrL) \rightarrow  \DGCat,\quad& \mathscr C\mapsto \mathscr C\otimes_{\Vect^{\le 0}}\Vect.
\end{align}
\end{void}

\begin{rem}
Since $\Vect^{\heartsuit}$ is identified with $\Vect^{\le 0}\otimes \Set$, the functor \eqref{eq-discrete-object-functor} coincides with the functor of taking discrete objects (\emph{cf.}~\cite[Example 4.8.1.22]{lurie2017higher}).

Since $\Vect$ is identified with $\Vect^{\le 0} \otimes \Sptr$, the functor \eqref{eq-spectrum-object-functor} coincides with the functor of taking spectrum objects (\emph{cf.}~\cite[Example 4.8.1.23]{lurie2017higher}).
\end{rem}

\begin{void}
Given $Y \in \Spc$, the DG category $\LS(Y)$ carries a $t$-structure, with connective and coconnective parts defined by
$$
\LS(Y)^{\le 0} := \lim_Y \Vect^{\le 0},\quad \LS(Y)^{\ge 0} := \lim_Y \Vect^{\ge 0}.
$$

Note that $f^{\dagger}$ is $t$-exact. In particular, its left adjoint $f_{\dagger}$ is right $t$-exact. This implies that the functor \eqref{eq-local-system-covariant} restricts to a functor
\begin{equation}
\label{eq-local-system-connective}
	\Spc \rightarrow \Vect^{\le 0}\Mod(\PrL), \quad Y \mapsto \LS(Y)^{\le 0}.
\end{equation}

Furthermore, \eqref{eq-local-system-connective} inherits a symmetric monoidal structure from \eqref{eq-local-system-covariant} (\emph{cf.}~Lemma \ref{eq-local-system-connective}) and we recover \eqref{eq-local-system-covariant} as the composition of \eqref{eq-local-system-connective} with the functor of taking spectrum objects \eqref{eq-spectrum-object-functor}.
\end{void}

\begin{void}[The heart of $\Rep_q(\check T)$]
\label{void-quantum-torus-heart}
The construction of $\Rep_q(\check T)$ (\emph{cf.}~\S\ref{void-quantum-torus-construction}) may thus be repeated with \eqref{eq-local-system-connective} instead of \eqref{eq-local-system-covariant}. This yields an $\mathbb E_M$-algebra in $\Vect^{\le 0}\Mod(\PrL)$
$$
\Rep_q(\check T)^{\le 0} := \LS(\mathscr H_q^{\loc})^{\le 0} \otimes_{\LS((\deloop\complexes^{\times})_M)^{\le 0}} (\Vect^{\le 0})_M
$$
together with an isomorphism of $\mathbb E_M$-algebras in $\DGCat$:
\begin{equation}
\label{eq-quantum-torus-as-spectrum-object}
\Rep_q(\check T) \simeq \Rep_q(\check T)^{\le 0} \otimes_{\Vect^{\le 0}} \Vect.
\end{equation}

Likewise, we may form the symmetric monoidal functor 
\begin{equation}
\label{eq-local-system-heart}
\Spc \rightarrow \Vect^{\heartsuit}\Mod(\PrL),\quad Y \mapsto \LS(Y)^{\heartsuit}
\end{equation}
by composing \eqref{eq-local-system-connective} with the functor of taking discrete objects \eqref{eq-discrete-object-functor}. The construction of \S\ref{void-quantum-torus-construction} then yields an $\mathbb E_M$-algebra in $\Vect^{\heartsuit}\Mod(\PrL)$
$$
\Rep_q(\check T)^{\heartsuit} := \LS(\mathscr H_q^{\loc})^{\heartsuit} \otimes_{\LS((\deloop\complexes^{\times})_M)^{\heartsuit}} (\Vect^{\heartsuit})_M,
$$
together with an isomorphism of such:
$$
\Rep_q(\check T)^{\heartsuit} \simeq \Rep_q(\check T)^{\le 0}\otimes_{\Vect^{\le 0}} \Vect^{\heartsuit}.
$$
\end{void}

\begin{rem}
The observation of Remark \ref{rem-quantum-torus-limit-presentation} also applies to $\Rep_q(\check T)^{\le 0}$ and $\Rep_q(\check T)^{\heartsuit}$. Namely, by passing to right adjoints, we obtain equivalences
\begin{align}
\label{eq-quantum-torus-connective-limit}
	\Rep_q(\check T)^{\le 0} & \simeq \lim \coBar_{\LS((\deloop\complexes^{\times})_M)^{\le 0}}(\LS(\mathscr H_q^{\loc})^{\le 0}, (\Vect^{\le 0})_M), \\
\label{eq-quantum-torus-heart-limit}
	\Rep_q(\check T)^{\heartsuit} & \simeq \lim \coBar_{\LS((\deloop\complexes^{\times})_M)^{\heartsuit}}(\LS(\mathscr H_q^{\loc})^{\heartsuit}, (\Vect^{\heartsuit})_M).
\end{align}

As in \S\ref{void-quantum-torus-fiber}, the expression \eqref{eq-quantum-torus-heart-limit} allows us to identify the fiber $\Rep_q(\check T)^{\heartsuit}_x$ at $x \in \Sing M$ as the \emph{abelian} category of local systems on $\mathscr H_{q, x}^{\loc}$ which are $\deloop\complexes^{\times}$-equivariant against the tautological character local system.
\end{rem}

\begin{void}
Next, we shall show that $\Rep_q(\check T)^{\le 0}$ (hence $\Rep_q(\check T)$, by \eqref{eq-quantum-torus-as-spectrum-object}) is completely determined by $\Rep_q(\check T)^{\heartsuit}$ as an $\mathbb E_M$-algebra. This requires some formalism of derived $\infty$-categories established by Lurie (\emph{cf.}~\cite[Appendix C]{lurie2018spectral})

Denote by $\Groth_{\infty}^{\sep}$ the $1$-full subcategory of $\PrL$ whose objects are separated Grothendieck prestable $\infty$-categories and whose morphisms are exact functors.\footnote{In \emph{op.cit.}, this $\infty$-category is denoted by $\Groth_{\infty}^{\mathrm{lex}, \sep}$.}

Denote by $\Groth_0$ the $1$-full subcategory of $\PrL$ whose objects are Grothendieck \emph{abelian} categories and whose morphisms are exact functors. (By definition, morphisms in $\Groth_{\infty}^{\sep}$ and $\Groth_0$ commute with colimits.)

By \cite[Theorem C.5.4.9]{lurie2018spectral}, the functor of taking discrete object
\begin{equation}
\label{eq-grothendieck-category-heart}
(\cdot)^{\heartsuit} : \Groth_{\infty}^{\sep} \rightarrow \Groth_0
\end{equation}
admits a left adjoint, whose value at $A \in \Groth_0$ is identified with the connective part of the derived $\infty$-category $D(A)^{\le 0}$ (\emph{cf.}~\cite[Proposition C.5.3.2, Proposition C.5.4.5]{lurie2018spectral}).
\end{void}

\begin{void}
By \cite[Theorem C.5.4.16]{lurie2018spectral}, $\Groth_0$ inherits a symmetric monoidal structure from $\PrL$. We endow $\Groth_{\infty}^{\sep}$ with the symmetric monoidal structure given by the \emph{separated} Lurie tensor product (\emph{cf.}~\cite[Corollary C.4.6.2]{lurie2018spectral}).\footnote{For both assertions, we invoked the fact that exactness of functors in $\PrL$ is preserved by tensor product (\emph{cf.}~\cite[Proposition C.4.4.1]{lurie2018spectral}).}

Consider $\Vect^{\le 0}$ (respectively $\Vect^{\heartsuit}$) as a commutative algebra in $\Groth_{\infty}^{\sep}$ (respectively, $\Groth_0$). Since \eqref{eq-grothendieck-category-heart} is symmetric monoidal, it lifts to a functor
\begin{equation}
\label{eq-grothendieck-category-linear-heart}
(\cdot)^{\heartsuit} : \Vect^{\le 0}\Mod(\Groth_{\infty}^{\sep}) \rightarrow \Vect^{\heartsuit}\Mod(\Groth_0).
\end{equation}
Since \eqref{eq-grothendieck-category-heart} admits a left adjoint, so does \eqref{eq-grothendieck-category-linear-heart}.
\end{void}

\begin{void}
Since $\LS(Y)$ is left complete (in particular, left separated) for any $Y \in \Spc$ and the pullback functor $f^{\dagger} : \LS(Y_2) \rightarrow \LS(Y_1)$, for any morphism $f : Y_1 \rightarrow Y_2$ in $\Spc$, is $t$-exact, the functor \eqref{eq-local-system-contravariant} induces functors
\begin{align*}
	\Spc^{\opposite} \rightarrow \Vect^{\le 0}\Mod(\Groth_{\infty}^{\sep}), &\quad Y \mapsto \LS(Y)^{\le 0} \\
	\Spc^{\opposite} \rightarrow \Vect^{\heartsuit}\Mod(\Groth_0), &\quad Y \mapsto \LS(Y)^{\heartsuit}
\end{align*}

The expressions \eqref{eq-quantum-torus-connective-limit}, \eqref{eq-quantum-torus-heart-limit} define $\Rep_q(\check T)^{\le 0}$, respectively $\Rep_q(\check T)^{\heartsuit}$ as $\mathbb E_M$-algebras in $\Vect^{\le 0}\Mod(\Groth_{\infty}^{\sep})$, respectively $\Vect^{\heartsuit}\Mod(\Groth_0)$.
\end{void}

\begin{void}
Denote by $L$ the left adjoint of \eqref{eq-grothendieck-category-linear-heart}. Being the left adjoint of a symmetric monoidal functor, $L$ is \emph{oplax} symmetric monoidal.

We do not know if $L$ is symmetric monoidal in general. Thus, we do not know if the image $L(\mathscr A)$ of an $\mathbb E_M$-algebra $\mathscr A$ in $\Vect^{\heartsuit}\Mod(\Groth_0)$ automatically carries an $\mathbb E_M$-algebra structure. This \emph{does} happen, however, if the leftward arrow in the correspondence
$$
\bigotimes_{i\in I} L(\mathscr A(U_i)) \leftarrow L(\bigotimes_{i\in I} \mathscr A(U_i)) \rightarrow L(\mathscr A(U))
$$
associated to any active morphism $\{U_i\}_{i\in I} \rightarrow U$ of $\mathbb E_M^{\otimes}$ is an isomorphism.

In particular, if $\mathscr A$ is \emph{fiberwise semisimple}, \emph{i.e.}~$\mathscr A_x$ is semisimple for every $x\in\Sing M$, then $L(\mathscr A)$ carries a natural $\mathbb E_M$-algebra structure.

Since $\Rep_q(\check T)^{\heartsuit}$ is fiberwise semisimple (\emph{cf.}~\S\ref{void-quantum-torus-fiber}), we obtain
$$
L(\Rep_q(\check T)^{\heartsuit}) \in \Alg_{\mathbb E_M}(\Vect^{\le 0}\Mod(\Groth_{\infty}^{\sep})).
$$
\end{void}

\begin{lem}
\label{lem-quantum-torus-connective-derived-category}
There is a natural isomorphism in $\Alg_{\mathbb E_M}(\Vect^{\le 0}\Mod(\Groth_{\infty}^{\sep}))$:
\begin{equation}
\label{eq-quantum-torus-connective-derived-category}
L(\Rep_q(\check T)^{\heartsuit}) \simeq \Rep_q(\check T)^{\le 0}.
\end{equation}
\end{lem}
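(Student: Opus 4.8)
The strategy is to exhibit a comparison morphism of $\mathbb E_M$-algebras and to check that it is an isomorphism fiber by fiber. Recall that the forgetful functor $\Alg_{\mathbb E_M}(-) \to \Fun(\Sing M, -)$ is conservative and that the evaluation functors $\ev_x$, $x \in \Sing M$, are jointly conservative on $\Fun(\Sing M, -)$ (\emph{cf.}~\S\ref{void-em-algebras}). Hence, once a comparison map $\theta : L(\Rep_q(\check T)^{\heartsuit}) \to \Rep_q(\check T)^{\le 0}$ of $\mathbb E_M$-algebras is in hand, it suffices to verify that each $\theta_x$ is an isomorphism.

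First I would identify $(\Rep_q(\check T)^{\le 0})^{\heartsuit}$ with $\Rep_q(\check T)^{\heartsuit}$ as $\mathbb E_M$-algebras in $\Vect^{\heartsuit}\Mod(\Groth_0)$. The functor $(\cdot)^{\heartsuit}$ of \eqref{eq-grothendieck-category-linear-heart} is symmetric monoidal, so it induces a functor on $\Alg_{\mathbb E_M}$; being a right adjoint at the object level, it commutes with the co-Bar totalizations, and it matches their terms by symmetric monoidality together with the $t$-exactness of the pullback functors $f^{\dagger}$. Thus $(\cdot)^{\heartsuit}$ carries the presentation \eqref{eq-quantum-torus-connective-limit} of $\Rep_q(\check T)^{\le 0}$ to the presentation \eqref{eq-quantum-torus-heart-limit} of $\Rep_q(\check T)^{\heartsuit}$, giving the desired identification. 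Since, as discussed just before the lemma, $L$ restricts to a functor on fiberwise semisimple $\mathbb E_M$-algebras, and since its unit and counit are computed objectwise in $\Sing M$ (hence $\mathbb E_M$-linear), the adjunction $L \dashv (\cdot)^{\heartsuit}$ furnishes the morphism $\theta$ adjoint to this identification.

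It then remains to check $\theta_x$. Because $L$ acts objectwise, $\theta_x$ agrees, up to an isomorphism, with the counit $L\bigl((\Rep_q(\check T)^{\le 0}_x)^{\heartsuit}\bigr) \to \Rep_q(\check T)^{\le 0}_x$. Arguing as in \S\ref{void-quantum-torus-fiber} with connective coefficients, the fiber $\Rep_q(\check T)^{\le 0}_x$ is equivalent to $\bigoplus_{\lambda \in \Lambda_x} \Vect^{\le 0}$, with heart $\bigoplus_{\lambda \in \Lambda_x} \Vect^{\heartsuit}$. Since $L$ preserves colimits and $L(\Vect^{\heartsuit}) \simeq \Vect^{\le 0}$ (\emph{cf.}~\cite[Proposition C.5.4.5]{lurie2018spectral}), the category $\bigoplus_{\lambda} \Vect^{\le 0}$ lies in the essential image of $L$, so the counit at it is an isomorphism. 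Hence $\theta_x$ is an isomorphism for every $x$, and $\theta$ is an isomorphism of $\mathbb E_M$-algebras.

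The main obstacle is the construction of $\theta$ in $\Alg_{\mathbb E_M}$ rather than merely in $\Vect^{\le 0}\Mod(\Groth_{\infty}^{\sep})$: since $L$ is only oplax symmetric monoidal, one must invoke fiberwise semisimplicity both to make sense of $L$ on the $\mathbb E_M$-algebras in play and to know that the relevant unit/counit are compatible with the $\mathbb E_M$-structure. By contrast, the fiberwise verification is essentially formal, reducing to the tautology that $\Vect^{\le 0}$ is the connective derived category of $\Vect^{\heartsuit}$.
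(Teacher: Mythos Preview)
Your proposal is correct and follows essentially the same approach as the paper: construct the comparison map via the counit of the adjunction $L \dashv (\cdot)^{\heartsuit}$ and verify it fiberwise using the description of \S\ref{void-quantum-torus-fiber}. Your argument is more detailed than the paper's two-line proof; in particular, the identification $(\Rep_q(\check T)^{\le 0})^{\heartsuit} \simeq \Rep_q(\check T)^{\heartsuit}$ that you re-derive from the co-Bar presentations is already recorded in \S\ref{void-quantum-torus-heart}, so that step is redundant, and your careful discussion of why the counit is $\mathbb E_M$-compatible (via the oplax-to-strong passage on fiberwise semisimple objects) makes explicit what the paper leaves implicit.
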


\begin{proof}
The morphism in one direction is defined by the counit of the adjunction between $L$ and \eqref{eq-grothendieck-category-linear-heart}.

The fact that it is an equivalence may be checked fiberwise, where it follows immediately from the description of \S\ref{void-quantum-torus-fiber}.
\end{proof}

\begin{rem}
Informally, Lemma \ref{lem-quantum-torus-connective-derived-category} and \eqref{eq-quantum-torus-as-spectrum-object} express the fact that $\Rep_q(\check T)$ is the derived $\infty$-category of $\Rep_q(\check T)^{\heartsuit}$ ``compatibly with the $\mathbb E_M$-algebra structures."

Namely, for each $x\in \Sing M$, the DG category $\Rep_q(\check T)_x$ is equivalent to the derived $\infty$-category $D(\Rep_q(\check T)_x^{\heartsuit})$, and the $\mathbb E_M$-algebra structure on $\Rep_q(\check T)$ is determined by the $\mathbb E_M$-algebra structure on $\Rep_q(\check T)^{\heartsuit}$.
\end{rem}

\subsection{The ribbon structure}
\label{sec-ribbon-twist}

\begin{void}
We remain in the context of \S\ref{void-topological-context}.

The goal of this subsection is to provide interpretations of the discrete invariants $Q$ and $b$ of $q$ (\emph{cf.}~\S\ref{void-betti-level-quadratic-form}, \S\ref{void-betti-level-symmetric-form}) in terms of the $\mathbb E_M$-algebra $\Rep_q(\check T)^{\heartsuit}$ (\emph{cf.}~\S\ref{void-quantum-torus-heart}).

Informally, $\Rep_q(\check T)^{\heartsuit}$ may be viewed as a family of ``twisted" braided monoidal categories parametrized by $M$. Our results say that $b$ controls the square of the commutativity constraint on $\Rep_q(\check T)^{\heartsuit}$, while $Q$ controls an additional ribbon structure (\emph{cf.}~Proposition \ref{prop-square-of-braiding}, Proposition \ref{prop-ribbon-twist}).
\end{void}

\begin{void}
\label{void-fiber-twisted-operad}
Given an object $\tau \in \deloop\Top(2)$, we define the \emph{$\tau$-twisted $\mathbb E_2$-operad} to be
$$
\mathbb E_{\tau}^{\otimes} := \deloop\Top(2)^{\otimes} \times_{\deloop\Top(2)^{\sqcup}} *^{\sqcup},
$$
using the map $*^{\sqcup} \rightarrow \deloop\Top(2)^{\sqcup}$ induced from $\tau$.

Any \emph{neutralization} of $\tau$, \emph{i.e.}~isomorphism with the neutral point of $\deloop\Top(2)$, induces an isomorphism between $\mathbb E_{\tau}^{\otimes}$ and the operad $\mathbb E_2^{\otimes}$ (\emph{cf.}~\cite[Example 5.4.2.15]{lurie2017higher}).

Given $x\in \Sing M$, the map $x : * \rightarrow \Sing M$ induces a morphism of $\infty$-operads
\begin{equation}
\label{eq-fiber-twisted-operad}
\mathbb E_{\tau_x}^{\otimes} \rightarrow \mathbb E_M^{\otimes},
\end{equation}
where $\tau_x$ denotes the image of $x$ under the map $\tau_M : \Sing M \rightarrow \deloop\Top(2)$ classifying the tangent microbundle of $M$ (\emph{cf.}~Remark \ref{rem-tangent-classifier-via-yoneda}).

In particular, given any symmetric monoidal $\infty$-category $\mathscr O$ and $\mathscr A \in \Alg_{\mathbb E_M}(\mathscr O)$, restriction along \eqref{eq-fiber-twisted-operad} defines $\mathscr A_x \in \Alg_{\mathbb E_{\tau_x}}(\mathscr O)$, whose underlying object of $\mathscr O$ is the fiber of $\mathscr A$ at $x$.
\end{void}

\begin{void}
Recall that $\mathbb E_2$-algebras in the $\infty$-category $\Cat_0$ of (1-)categories are precisely braided monoidal categories (\emph{cf.}~\cite[Example 5.1.2.4]{lurie2017higher}).

Thus, given any $x \in \Sing M$ and a neutralization of $\tau_x$, the fiber $\Rep_q(\check T)^{\heartsuit}_x$ admits the structure of a braided monoidal category.

Given two objects $V^{\lambda_1}$, $V^{\lambda_2}$ of $\Rep_q(\check T)^{\heartsuit}_x$ with gradings $\lambda_1, \lambda_2 \in \Lambda_x$ (\emph{cf.}~\S\ref{void-quantum-torus-fiber}), the square of the commutativity constraint
\begin{equation}
\label{eq-square-of-braiding}
V^{\lambda_1} \otimes V^{\lambda_2} \simeq V^{\lambda_2} \otimes V^{\lambda_1} \simeq V^{\lambda_1} \otimes V^{\lambda_2}
\end{equation}
is an automorphism of $V^{\lambda_1} \otimes V^{\lambda_2}$.
\end{void}

\begin{prop}
\label{prop-square-of-braiding}
The automorphism \eqref{eq-square-of-braiding} equals multiplication by $b(\lambda_1, \lambda_2)$.
\end{prop}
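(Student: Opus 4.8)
The plan is to identify the automorphism \eqref{eq-square-of-braiding} with a Whitehead product in $\mathscr H_{q,x}$, and then compute that Whitehead product from the $k$-invariant of $\mathscr H_{q,x}$. Since the assertion concerns the fiber at $x$ equipped with a neutralization of $\tau_x$, I would work with the $\mathbb E_2$-algebra obtained by restriction along \eqref{eq-fiber-twisted-operad} (\emph{cf.}~\S\ref{void-fiber-twisted-operad}), and abbreviate $Z := \mathscr H_{q,x}^{\loc}$. First, the defining property of $\Omega_M$ (the triangle following \eqref{eq-functor-local-loop}), applied to the object $U \cong \reals^2$ of $\mathbb E_M^{\otimes}$ corresponding to $x$, together with the contractibility of $U$, identifies $Z$ with $\Gamma_c(\reals^2, \mathscr H_{q,x}) \simeq \Maps_*(S^2, \mathscr H_{q,x}) = \Omega^2\mathscr H_{q,x}$, carrying the standard little-$2$-disks structure of a double loop space and the residual $\Omega^2(\deloop^3\complexes^{\times}) \simeq \deloop\complexes^{\times}$-action. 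The long exact sequence of $\mathscr H_{q,x} \rightarrow \deloop^2\Lambda_x \rightarrow \deloop^4\complexes^{\times}$ shows that $\mathscr H_{q,x}$ is a $2$-type with $\pi_2 \simeq \Lambda_x$ and $\pi_3 \simeq \complexes^{\times}$, whose first $k$-invariant is the class $[q_x] \in H^4(K(\Lambda_x, 2); \complexes^{\times}) = \Quad(\Lambda_x, \complexes^{\times})$, i.e.~the fiber $Q_x$ of the associated quadratic form of $q$ (\emph{cf.}~\S\ref{void-betti-level-quadratic-form}).

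Since $\LS(-)$ is symmetric monoidal (Lemma \ref{lem-local-system-kunneth-formula}) and formation of fibers and of relative tensor products preserves $\mathbb E_2$-structures, the braided monoidal category $\Rep_q(\check T)^{\heartsuit}_x$ depends only on $Z$ as an $\mathbb E_2$-space with its $\deloop\complexes^{\times}$-action: by \S\ref{void-quantum-torus-fiber} and \S\ref{void-quantum-torus-heart} it is the category of local systems on $Z$ that are equivariant against the tautological character, with convolution product along the $\mathbb E_2$-multiplication; concretely it is the pointed braided monoidal category $\bigoplus_{\lambda}\Vect$ whose associativity and braiding constraints record the low-degree invariants of the $\mathbb E_2$-space $\Omega^2\mathscr H_{q,x}$. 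In particular, the double braiding on $V^{\lambda_1}\otimes V^{\lambda_2}$ is the monodromy of the multiplication map along the generator of $\pi_1$ of the binary space $\mathbb E_2(2) \simeq \mathrm{Conf}_2(\reals^2) \simeq S^1$: restricted to the $(\lambda_1,\lambda_2)$-graded part it is an element of $\pi_1$ of the space of maps $Z_{\lambda_1}\times Z_{\lambda_2} \rightarrow Z_{\lambda_1+\lambda_2}$ based at the multiplication map, which is canonically $\complexes^{\times}$ since its target is a $K(\complexes^{\times},1)$ and its source is connected. By the classical description of the $\pi_1(\mathbb E_2(2))$-monodromy on a double loop space $\Omega^2 W$ via the Browder bracket — which on $\pi_0(\Omega^2 W)^{\otimes 2} = \pi_2(W)^{\otimes 2}$ is the Whitehead product valued in $\pi_1(\Omega^2 W) = \pi_3(W)$ — the automorphism \eqref{eq-square-of-braiding} is the Whitehead product $[\lambda_1,\lambda_2] \in \pi_3\mathscr H_{q,x} \simeq \complexes^{\times}$.

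It then remains to compute the Whitehead product pairing of a $2$-type from its $k$-invariant: for a $2$-type with $\pi_2 \simeq \Gamma$, $\pi_3 \simeq A$ and $k$-invariant $\kappa \in H^4(K(\Gamma,2); A) = \Quad(\Gamma, A)$, the Whitehead product $\pi_2 \otimes \pi_2 \rightarrow \pi_3$ equals the symmetric bilinear form associated to the quadratic form $\kappa$. The normalization is fixed by the truncation of $W = S^2$, where $[\iota_2,\iota_2] = 2\eta$ while the $k$-invariant is the norm form $n \mapsto n^2$, whose associated bilinear form sends $(1,1)$ to $2$. Applying this with $\Gamma = \Lambda_x$, $A = \complexes^{\times}$ and $\kappa = Q_x$ identifies \eqref{eq-square-of-braiding} with $Q_x(\lambda_1+\lambda_2)Q_x(\lambda_1)^{-1}Q_x(\lambda_2)^{-1} = b(\lambda_1,\lambda_2)$ (\emph{cf.}~\S\ref{void-betti-level-symmetric-form}), as claimed.

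The main obstacle is the penultimate step: matching the double braiding of the convolution monoidal structure on equivariant local systems precisely with the topological $\pi_1(\mathbb E_2(2))$-monodromy of the $\mathbb E_2$-space $\Omega^2\mathscr H_{q,x}$, and thence with the Whitehead product, while tracking the orientation-dependent trace-map normalization \eqref{eq-local-trace-map} and the delooping so that no spurious sign or factor of $2$ intervenes. If that turns out awkward, a cleaner route is to observe that both $(\lambda_1,\lambda_2)\mapsto$ \eqref{eq-square-of-braiding} and $b$ are biadditive in $(\lambda_1,\lambda_2)$ and natural in $(\Lambda, q)$, so that — by naturality under maps of lattices together with the splitting \eqref{eq-bilinear-form-splitting} — it suffices to treat the single universal case $\Lambda = \integers^2$, $q = (\deloop^2 e_1^{\vee})\cup(\deloop^2 e_2^{\vee})$, where both sides can be evaluated by hand.
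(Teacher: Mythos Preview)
Your approach is correct and genuinely different from the paper's. The paper does not attack the double braiding directly: instead it first constructs an $\mathbb E_{\deloop\GL_2^+}$-lift of $\Rep_q(\check T)_x$ (\S\ref{void-ribbon-structure-construction}), proves that the resulting ribbon twist $\theta_{V^{\lambda}}$ acts by $Q(\lambda)$ (Proposition~\ref{prop-ribbon-twist}, argued by reducing linearly in $q$ to cup-product levels, much like your proposed fallback), and then invokes the ribbon identity $\theta_{V^{\lambda_1}\otimes V^{\lambda_2}}\circ(\theta_{V^{\lambda_1}}^{-1}\otimes\theta_{V^{\lambda_2}}^{-1})$ to read off $b(\lambda_1,\lambda_2)$. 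Your route bypasses the ribbon structure entirely: you recognize $\mathscr H_{q,x}^{\loc}\simeq\Omega^2\mathscr H_{q,x}$, observe that $\Rep_q(\check T)_x^{\heartsuit}$ only sees its $1$-truncation as a braided categorical group with $\pi_0=\Lambda_x$, $\pi_1=\complexes^{\times}$, and identify the double braiding with the Whitehead product on $\mathscr H_{q,x}$, which is the polarization of the $k$-invariant $Q_x$. This is more direct and uses only classical homotopy theory (Browder bracket, Eilenberg--MacLane/Joyal--Street classification of braided categorical groups); the paper's route, in exchange, also yields the ribbon twist formula, which is of independent interest. The ``main obstacle'' you flag is real but mild: once you phrase it as ``the braided monoidal structure on $\Rep_q(\check T)_x^{\heartsuit}$ is the $\Vect$-linearization of the braided categorical group $\tau_{\le 1}\Omega^2\mathscr H_{q,x}$,'' the identification of the double braiding with the Whitehead pairing is the Joyal--Street dictionary, and your $S^2$ normalization check pins down the constant.
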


\begin{void}
\label{void-ribbon-structure-context}
We shall prove Proposition \ref{prop-square-of-braiding} along with an assertion describing a ``twisted" ribbon structure on $\Rep_q(\check T)^{\heartsuit}_x$ in terms of the quadratic form $Q$.

We fix a smooth structure on $M$. This allows us to reduce the tangent microbundle of $M$ to its tangent bundle, classified by a map $\tau_M : \Sing M \rightarrow \deloop \GL_2^+$, where $\GL_2^+$ is the group of orientation-preserving automorphisms of $\reals^2$.

Consider the $\infty$-operad (\emph{cf.}~\cite[Example 5.4.2.16]{lurie2017higher})
$$
\mathbb E_{\deloop\GL_2^+}^{\otimes} := \deloop\Top(2)^{\otimes} \times_{\deloop\Top(2)^{\sqcup}} (\deloop\GL_2^+)^{\sqcup}.
$$
For any $x \in \Sing M$, the map $\tau_x : * \rightarrow \deloop\GL_2^+$ induces a morphism of $\infty$-operads
\begin{equation}
\label{eq-twisted-e2-to-oriented}
\mathbb E_{\tau_x}^{\otimes} \rightarrow \mathbb E_{\deloop\GL_2^+}^{\otimes}.
\end{equation}

We shall lift the $\mathbb E_{\tau_x}$-algebra $\Rep_q(\check T)_x$ to an $\mathbb E_{\deloop\GL_2^+}$-algebra along \eqref{eq-twisted-e2-to-oriented}.
\end{void}

\begin{rem}
\label{rem-twisted-oriented-e2-assembly}
Denote by $T_x$ the oriented $2$-dimensional vector space classified by $\tau_x \in \deloop\GL_2^+$. Then its orientation-preserving automorphisms form a topological group $\GL^+(T_x)$ and we have a canonical isomorphism of $\infty$-groupoids over $\deloop\Top(2)$:
$$
\tau_x / \GL^+(T_x) \simeq \deloop\GL_2^+.
$$

Given a symmetric monoidal $\infty$-category $\mathscr O$, we write $\Alg_{\mathbb E_{\tau_x}}(\mathscr O)^{\GL^+(T_x)}$ for the $\infty$-category of $\GL^+(T_x)$-invariants of $\Alg_{\mathbb E_{\tau_x}}(\mathscr O)$. Since $\mathbb E_{\tau_x}^{\otimes}$ defines a family of $\infty$-operads over $\deloop\GL^+(T_x)$ with assembly is $\mathbb E_{\deloop\GL_2^+}^{\otimes}$ (\emph{cf.}~\cite[Remark 5.4.2.13]{lurie2017higher}), we obtain
$$
\Alg_{\mathbb E_{\tau_x}}(\mathscr O)^{\GL^+(T_x)} \simeq \Alg_{\mathbb E_{\deloop\GL_2^+}}(\mathscr O).
$$
\end{rem}

\begin{void}[Construction of the $\mathbb E_{\deloop\GL_2^+}$-algebra structure]
\label{void-ribbon-structure-construction}
Applying the construction of the quantum torus (\emph{cf.}~\S\ref{void-quantum-torus-construction}) with $T_x$ instead of $M$ and the \emph{constant} Betti level $q_x : \deloop^2\Lambda_x \rightarrow \deloop^4\complexes^{\times}$ instead of $q$, we obtain
\begin{equation}
\label{eq-quantum-torus-tangent}
\Rep_{q_x}(\check T) \in \Alg_{\mathbb E_{T_x}}(\DGCat)
\end{equation}
which recovers $\Rep_q(\check T)_x$ under the canonical isomorphism $\mathbb E_{\tau_x}^{\otimes} \simeq \mathbb E_{T_x}^{\otimes}$ of $\infty$-operads.

To lift $\Rep_q(\check T)_x$ to $\Alg_{\mathbb E_{\deloop\GL_2^+}}(\DGCat)$, it thus suffices to endow $\Rep_{q_x}(\check T)$ with a $\GL^+(T_x)$-equivariance structure (\emph{cf.}~Remark \ref{rem-twisted-oriented-e2-assembly}).

Consider the commutative diagram of $\mathbb E_{T_x}$-algebras in $\Spc$:
\begin{equation}
\label{eq-heisenberg-algebra-tangent}
\begin{tikzcd}[column sep = 1.5em]
	\Omega_{T_x}(\deloop^2\Lambda_x) \ar[r, "\simeq"] \ar[d, "\Omega_{T_x}(q_x)"] & (\Lambda_x)_{T_x} \ar[d] \\
	\Omega_{T_x}(\deloop^4\complexes^{\times}) \ar[r, "\simeq"] & (\deloop^2\complexes^{\times})_{T_x}
\end{tikzcd}
\end{equation}
where the horizontal isomorphisms are given by the local trace maps (\emph{cf.}~\S\ref{void-local-trace-map}). By construction, it suffices to endow the right vertical map in \eqref{eq-heisenberg-algebra-tangent} with a $\GL^+(T_x)$-equivariance structure, with respect to the natural $\GL^+(T_x)$-equivariance on $(\Lambda_x)_{T_x}$ and $(\deloop^2\complexes^{\times})_{T_x}$.

Note that the left vertical arrow of \eqref{eq-heisenberg-algebra-tangent} admits an $\GL^+(T_x)$-equivariance structure by functoriality with respect to $T_x$. The desired structure follows because the local trace map \eqref{eq-local-trace-map} for $T_x$ is naturally $\GL^+(T_x)$-equivariant.
\end{void}

\begin{void}
\label{void-ribbon-twist}
We return to the context of \S\ref{void-ribbon-structure-context}.

Since the underlying $\infty$-category of $\mathbb E_{\deloop\GL_2^+}$ is $\deloop\GL_2^+$, the $\mathbb E_{\deloop\GL_2^+}$-algebra structure on $\Rep_q(\check T)_x$ yields a functor
\begin{equation}
\label{eq-quantum-torus-fiber-ribbon}
\deloop\GL_2^+ \rightarrow \DGCat,
\end{equation}
sending the point $\tau_x$ (\emph{not} the neutral point!) to $\Rep_q(\check T)_x$.

Let us identify $\deloop \GL_2^+$ with $\deloop^2\integers$ as objects of $\Spc_*$. This identification is determined by the homotopy equivalences
$$
\deloop\integers \simeq S^1 \simeq \SO(2) \simeq \GL_2^+.
$$

Once a neutralization of $\tau_x \in \deloop\GL_2^+$ is chosen, the generator $1\in\integers$ defines an automorphism $\theta$ of the identity endofunctor on $\Rep_q(\check T)_x$ under \eqref{eq-quantum-torus-fiber-ribbon}. Given $V^{\lambda} \in \Rep_q(\check T)_x^{\heartsuit}$ with grading $\lambda \in \Lambda_x$, the automorphism $\theta$ specializes to an automorphism
\begin{equation}
\label{eq-ribbon-twist}
\theta_{V^{\lambda}} : V^{\lambda} \simeq V^{\lambda}.
\end{equation}
\end{void}

\begin{prop}
\label{prop-ribbon-twist}
The automorphism \eqref{eq-ribbon-twist} equals multiplication by $Q(\lambda)$.
\end{prop}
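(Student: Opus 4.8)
The plan is to identify the automorphism $\theta$ with the canonical ``balancing'' on the double loop space $\Omega^2(\mathscr H_{q_x})$, where $\mathscr H_{q_x} := \mathrm{fib}(q_x)$ is the fiber of the constant Betti level $q_x : \deloop^2\Lambda_x \to \deloop^4\complexes^\times$ (so $\mathscr H_{q_x} = (\mathscr H_q)_x$), and then to compute that balancing explicitly. First I would unwind the definition. By \S\ref{void-ribbon-twist}, $\theta$ is the automorphism of $\id_{\Rep_q(\check T)_x}$ determined by the generator $1 \in \integers \cong \pi_2\deloop\GL_2^+$ through the functor \eqref{eq-quantum-torus-fiber-ribbon}, so it is encoded entirely by the $\GL^+(T_x)$-equivariance structure of \S\ref{void-ribbon-structure-construction}. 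Since $\Rep_{q_x}(\check T)$ is built (\emph{cf.}~\eqref{eq-quantum-torus-definition}) by applying $\LS(-)$ and a relative tensor product to the $\GL^+(T_x)$-equivariant inputs $\mathscr H^{\loc}_{q,x} = \Omega_{T_x}(\mathscr H_{q_x})$, $(\deloop\complexes^\times)_{T_x}$ and $\Vect_{T_x}$, of which the latter two carry trivial $\GL^+(T_x)$-structures (they are $(\cdot)_{T_x}$ of fixed objects, the requisite compatibility using the $\GL^+(T_x)$-equivariance of the local trace map matching $\Omega_{T_x}(\deloop^3\complexes^\times)$ with $(\deloop\complexes^\times)_{T_x}$), the automorphism $\theta$ is the image, via $\LS$ and the relative tensor product, of the $\GL^+(T_x) \simeq \SO(2)$-equivariance on $\mathscr H^{\loc}_{q,x}$. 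Now $\SO(2)$ is connected, so it acts trivially on the discrete set $\pi_0\mathscr H^{\loc}_{q,x} = \Lambda_x$ and therefore preserves each graded component; the $\lambda$-component is a $\complexes^\times$-gerbe over a point, i.e.~is (noncanonically) equivalent to $\deloop\complexes^\times$, so the generator of $\pi_1\SO(2)$ yields an element of $\pi_1\mathrm{Aut}(\deloop\complexes^\times) = \complexes^\times$. Because a self-homotopy $z \in \complexes^\times$ of $\id_{\deloop\complexes^\times}$ acts by the scalar $z$ on the weight-one local system, $\theta_{V^\lambda}$ is precisely multiplication by this element; thus the proposition reduces to computing the $\SO(2)$-balancing of $\mathscr H^{\loc}_{q,x}$ on the $\lambda$-component.

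For the computation, recall that $\Omega_{T_x}(-)$ is $\Maps_*(S^2, -)$, where $S^2$ is the one-point compactification of $T_x \cong \reals^2$; accordingly $\GL^+(T_x) \simeq \SO(2)$ acts by precomposition with its rotation action on $S^2$, which fixes the basepoint $\infty$ as well as $0$. I would then invoke the classical fact that for \emph{any} pointed connected space $Y$, this balancing, restricted to the component $\lambda \in \pi_0\Maps_*(S^2, Y) = \pi_2 Y$ and to the generator of $\pi_1\SO(2)$, equals --- under the canonical identification of $\pi_1$ of the $\lambda$-component of $\Maps_*(S^2, Y)$ with $\pi_3 Y$ (furnished by the loop-space structure) --- the composite $\lambda\circ\eta$, where $\eta \in \pi_3 S^2$ is the Hopf class. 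This is the statement that the ``rotation clutching map'' $S^2 \wedge S^1 = S^3 \to S^2$ attached to the $\SO(2)$-action is $\eta$ (equivalently, the nontrivial $S^2$-bundle over $S^2$ has cell structure $S^2 \vee S^2 \cup_{[\iota_1,\iota_2] + \iota_1\circ\eta} e^4$), the sign being pinned down by the orientation of $M$. Hence $\theta_{V^\lambda}$ is multiplication by $\lambda\circ\eta \in \pi_3\mathscr H_{q_x} = \complexes^\times$.

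It remains to identify $\lambda\circ\eta$ with $Q(\lambda)$. A count of homotopy groups shows that $\mathscr H_{q_x} = \mathrm{fib}(q_x)$ is exactly the two-stage Postnikov system with $\pi_2 = \Lambda_x$, $\pi_3 = \complexes^\times$ and $k$-invariant $q_x \in H^4(\deloop^2\Lambda_x;\complexes^\times)$. For such a space the operation $-\circ\eta : \pi_2 \to \pi_3$ sends $a$ to $\langle q_x, q_2(a)\rangle$, where $q_2 : \Lambda_x \to H_4(\deloop^2\Lambda_x;\integers)$ is the universal quadratic map and $\langle q_x, -\rangle : H_4(\deloop^2\Lambda_x;\integers) \to \complexes^\times$ is extracted from $q_x$ via $H^4(\deloop^2\Lambda_x;\complexes^\times) \cong \Hom\bigl(H_4(\deloop^2\Lambda_x;\integers),\complexes^\times\bigr)$ (valid since $\Lambda_x$ is free, so $H_3$ vanishes); concretely, the $k$-invariant evaluates on the Hopf attaching map of the top cell of $S^2 \cup_\eta e^4$. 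But by the very construction of the associated quadratic form (\S\ref{void-betti-level-quadratic-form}, \eqref{eq-lattice-deloop-cohomology}), the assignment $a \mapsto \langle q_x, q_2(a)\rangle$ is the fiber at $x$ of $Q$. Therefore $\theta_{V^\lambda}$ is multiplication by $Q(\lambda)$, as claimed.

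The principal difficulty I anticipate lies in making the reduction of the first paragraph fully rigorous within the $\infty$-operadic framework --- that is, verifying that the $\GL^+(T_x)$-equivariance structure produced in \S\ref{void-ribbon-structure-construction}, assembled through the local trace map and the functoriality of $\Omega_{T_x}$ in $T_x$, genuinely coincides with the ``precompose with rotations'' structure on $\Maps_*(S^2, \mathscr H_{q_x})$. I expect the cleanest route is to run the computation universally --- $\mathrm{Homeo}^+(S^2)$-equivariantly, for an arbitrary pointed connected target --- and only afterwards to specialize along $q_x$, together with a careful proof of the topological input of the second paragraph. As a byproduct one also obtains Proposition \ref{prop-square-of-braiding}: the square of the braiding corresponds to the Whitehead product $\pi_2 \times \pi_2 \to \pi_3$ on $\Omega^2(\mathscr H_{q_x})$, which in the two-stage model is the symmetric bilinear form associated to the $k$-invariant, namely $b$; alternatively, it follows formally from Proposition \ref{prop-ribbon-twist} together with the ribbon axiom, since $b$ is by definition the bilinear form associated to $Q$.
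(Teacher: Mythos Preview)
Your approach is correct and genuinely different from the paper's. The paper does not pass through the Hopf map or the Postnikov description of $\mathscr H_{q_x}$. Instead, it works directly with the $\SO(2)$-equivariant square \eqref{eq-heisenberg-algebra-tangent-global} and observes that both sides of the asserted equality depend \emph{linearly} in $q$; by Remark~\ref{rem-bisector-splitting} it then suffices to treat levels of the form $q=(\deloop^2 y)\cup(\deloop^2 z)$ for characters $y:\Lambda_x\to\integers$ and $z:\Lambda_x\to\complexes^{\times}$, where $Q(\lambda)=z(\lambda)^{y(\lambda)}$. The computation of $\theta(\lambda)$ is then a cup-product manipulation with the fundamental class $[\reals^2]$, using the $\SO(2)$-equivariance of the local trace map analyzed in \S\ref{void-fundamental-class-tangent-space}.

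Your argument trades this explicit reduction for two classical homotopy-theoretic inputs: that the rotation balancing on $\Omega^2Y$ is precomposition with $\eta$, and that for a two-stage Postnikov system the $\eta$-action is the quadratic form attached to the $k$-invariant. This is more conceptual and explains \emph{why} the quadratic form appears (it is literally the Eilenberg--MacLane identification of $H^4(K(\Lambda,2);\complexes^{\times})$ with $\Quad(\Lambda,\complexes^{\times})$), whereas the paper's approach has the virtue of never leaving the formalism it has set up. The ``principal difficulty'' you flag---matching the $\infty$-operadic $\GL^+(T_x)$-equivariance of \S\ref{void-ribbon-structure-construction} with the rotation action on $\Maps_*(S^2,-)$---is precisely the content of \S\ref{void-fundamental-class-tangent-space}, so the paper already supplies what you need there. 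Your closing remark on Proposition~\ref{prop-square-of-braiding} via the Whitehead product is also correct, and parallels the paper's deduction via the ribbon axiom.
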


\begin{void}
\label{void-fundamental-class-tangent-space}
Before we prove Proposition \ref{prop-ribbon-twist}, we relate the local trace map (\emph{cf.}~\S\ref{void-local-trace-map}) for $\reals^2$ to the $\SO(2)$-action. Namely, consider the (inverse of the) local trace map
\begin{equation}
\label{eq-fundamental-class-tangent-space}
	\integers \simeq \Gamma_c(\reals^2, \deloop^2\integers),\quad 1\mapsto [\reals^2].
\end{equation}

Writing $B_r \subset \reals^2$ for the closed disk of radius $r\in\reals_{>0}$ centered at the origin, we may express $\Gamma_c(\reals^2, \deloop^2\integers)$ as the fiber of the map
\begin{equation}
\label{eq-compactly-supported-section-tangent-space}
\Gamma(\reals^2, \deloop^2\integers) \rightarrow \colim_{r\rightarrow\infty} \Gamma(\reals^2\setminus B_r, \deloop^2\integers).
\end{equation}
The inclusion $\reals^2\setminus B_r \subset \reals^2$ is $\SO(2)$-equivariant. Taking the quotient by $\SO(2)$ and passing to homotopy types, it gives rise to the neutral map $e : * \rightarrow \deloop\SO(2)$. In particular, the fiber of \eqref{eq-compactly-supported-section-tangent-space} is identified with the fiber of $e^* : \Maps(\deloop\SO(2), \deloop^2\integers) \rightarrow \deloop^2\integers$ via pullback:
\begin{equation}
\label{eq-compactly-supported-section-tangent-space-so2}
\Maps_*(\deloop\SO(2), \deloop^2\integers) \simeq \Gamma_c(\reals^2, \deloop^2\integers).
\end{equation}

Under the isomorphism \eqref{eq-compactly-supported-section-tangent-space-so2}, the class $[\reals^2]$ corresponds to the canonical isomorphism $\deloop\SO(2) \simeq \deloop^2\integers$ in $\Spc_*$ (\emph{cf.}~\S\ref{void-ribbon-twist}).
\end{void}

\begin{void}
We now turn to the proof of Proposition \ref{prop-ribbon-twist}.
\end{void}

\begin{proof}[Proof of Proposition \ref{prop-ribbon-twist}]
We identify $T_x$ with $\reals^2$ using the chosen neutralization of $\tau_x$.

The commutative square \eqref{eq-heisenberg-algebra-tangent} specializes to a commutative square of $\infty$-groupoids endowed with $\SO(2)$-action:
\begin{equation}
\label{eq-heisenberg-algebra-tangent-global}
\begin{tikzcd}[column sep = 1.5em]
	\Gamma_c(\reals^2, \deloop^2\Lambda_x) \ar[r, "\simeq"]\ar[d, "{\Gamma_c(\reals^2, q_x)}"] & \Lambda_x \ar[d] \\
	\Gamma_c(\reals^2, \deloop^4\complexes^{\times}) \ar[r, "\simeq"] & \deloop^2\complexes^{\times}
\end{tikzcd}
\end{equation}
Here, the horizontal isomorphisms are the local trace maps. The group $\SO(2)$ acts on the left column of \eqref{eq-heisenberg-algebra-tangent-global} via its action on $\reals^2$ and acts trivially on the right column.

Let us express $\theta_{V^{\lambda}}$ in terms of the $\SO(2)$-equivariant morphism $\Lambda_x \rightarrow \deloop^2\complexes^{\times}$ in \eqref{eq-heisenberg-algebra-tangent-global}. Indeed, evaluating the latter at $\lambda$ yields an $\SO(2)$-invariant object of $\deloop^2\complexes^{\times}$, \emph{i.e.}~an object of the $\infty$-groupoid
\begin{equation}
\label{eq-heisenberg-algebra-tangent-global-so2}
\Gamma_c(\reals^2, q_x)(\lambda) \in \Maps(\deloop\SO(2), \deloop^2\complexes^{\times})
\end{equation}
Under the identification $\deloop\SO(2) \simeq \deloop^2\integers$ (\emph{cf.}~\S\ref{void-ribbon-twist}), the class of \eqref{eq-heisenberg-algebra-tangent-global-so2} is an element $\theta(\lambda) \in \complexes^{\times}$. By construction, the automorphism $\theta_{V^{\lambda}}$ acts as multiplication by $\theta(\lambda)$.

It remains to prove the following equality for each $\lambda \in \Lambda_x$:
\begin{equation}
\label{eq-ribbon-twist-via-quadratic-form}
	\theta(\lambda) = Q(\lambda).
\end{equation}

First, we observe that both sides of \eqref{eq-ribbon-twist-via-quadratic-form} depend linearly on $q$: For $\theta(\lambda)$, this holds because \eqref{eq-heisenberg-algebra-tangent-global-so2} depends linearly on $q$, while for $Q(\lambda)$, this holds because \eqref{eq-lattice-deloop-cohomology} comes from a fiber sequence in $\integers\Mod$. By Remark \ref{rem-bisector-splitting}, we may assume that $q$ is of the form
$$
q \simeq (\deloop^2 y) \cup (\deloop^2 z),
$$
for characters $y : \Lambda_x \rightarrow \integers$ and $z : \Lambda_x \rightarrow \complexes^{\times}$. The value $Q(\lambda)$ then equals $z(\lambda)^{y(\lambda)}$.

Let us determine \eqref{eq-heisenberg-algebra-tangent-global-so2} for this choice of $q$. Indeed, as an $\SO(2)$-invariant object of $\Gamma_c(\reals^2, \deloop^4\complexes^{\times})$, it is given by the cup product
\begin{equation}
\label{eq-cup-product-compactly-supported-section}
(y(\lambda) \cdot [\reals^2]) \cup (z(\lambda) \cdot [\reals^2]).
\end{equation}
We may view $z(\lambda) \cdot [\reals^2]$ as an $\SO(2)$-invariant object of $\Gamma(\reals^2, \deloop^2\complexes^{\times})$ (\emph{i.e.}~forgetting that it is compactly supported) and consequently as an $\SO(2)$-equivariant morphism
\begin{equation}
\label{eq-character-as-linear-morphism}
z(\lambda) \cdot [\reals^2] : \integers \rightarrow \deloop^2\complexes^{\times}.
\end{equation}

If we forget the $\SO(2)$-equivariance structure on \eqref{eq-character-as-linear-morphism}, then it is simply the $H\integers$-linear morphism $z(\lambda) : \integers \rightarrow \deloop^2\complexes^{\times}$. The $\SO(2)$-equivariance structure, however, is determined by $[\reals^2]$: It says that the image of the generator $1 \in \integers$ is an $\SO(2)$-invariant object of $\deloop^2\complexes^{\times}$ whose class equals $z(\lambda)$ (\emph{cf.}~\S\ref{void-fundamental-class-tangent-space}).

The cup product \eqref{eq-cup-product-compactly-supported-section} is isomorphic to a Yoneda product, \emph{i.e.}~the image of the $\SO(2)$-invariant object $y(\lambda) \cdot [\reals^2] \in \Gamma_c(\reals^2, \deloop^2\integers)$ under the double deloop of \eqref{eq-character-as-linear-morphism}. By naturality of the local trace map (\emph{cf.}~\S\ref{void-local-trace-map}), we have a commutative square
$$
\begin{tikzcd}[column sep = 1em]
	\Gamma_c(\reals^2, \deloop^2\integers) \ar[d, "{z(\lambda)\cdot[\reals^2]}"]\ar[r, "\simeq"] & \integers\ar[d, "{z(\lambda)\cdot[\reals^2]}"] \\
	\Gamma_c(\reals^2, \deloop^4\complexes^{\times}) \ar[r, "\simeq"] & \deloop^2\complexes^{\times}
\end{tikzcd}
$$
Thus, \eqref{eq-heisenberg-algebra-tangent-global-so2} is isomorphic to the image of $y(\lambda) \in \integers$ under the $\SO(2)$-equivariant morphism \eqref{eq-character-as-linear-morphism}. In particular, its class equals $z(\lambda)^{y(\lambda)}$, as desired.
\end{proof}

\begin{void}
Finally, we shall deduce Proposition \ref{prop-square-of-braiding} from Proposition \ref{prop-ribbon-twist} and standard facts about braided monoidal categories.
\end{void}

\begin{proof}[Proof of Proposition \ref{prop-square-of-braiding}]
Choose a smooth structure on $M$ and a neutralization of $\tau_x$ as a point of $\deloop\GL_2^+$.

The construction of \S\ref{void-ribbon-structure-construction} lifts the braided monoidal category $\Rep_q(\check T)_x^{\heartsuit}$ lifts to an $\mathbb E_{\deloop\GL_2^+}$-algebra in $\Cat_0$. The latter is precisely a ribbon structure on $\Rep_q(\check T)_x^{\heartsuit}$, whose ribbon twist is provided by the automorphism \eqref{eq-ribbon-twist} (\emph{cf.}~\cite[\S4]{salvatore2001frameddiscsoperadsequivariant}).

In particular, this implies that the automorphism \eqref{eq-square-of-braiding} equals
$$
\theta_{V^{\lambda_1}\otimes V^{\lambda_2}} \circ (\theta_{V^{\lambda_1}}^{-1} \otimes \theta_{V^{\lambda_2}}^{-1}).
$$
By Proposition \ref{prop-ribbon-twist}, this is the multiplication by $b(\lambda_1, \lambda_2)$, as desired.
\end{proof}

\begin{rem}
In the proof of Proposition \ref{prop-square-of-braiding}, we used the term ``ribbon" as in \cite[Definition 4.9]{salvatore2001frameddiscsoperadsequivariant}, referring only to additional structure of the twist.

Some authors call this structure ``balanced" and reserve the term ``ribbon" for \emph{rigid} balanced braided monoidal categories. The subcategory of \emph{compact objects} in $\Rep_q(\check T)_x^{\heartsuit}$ is indeed rigid, hence ``ribbon" in the stronger sense.
\end{rem}

\section{Global constructions}

In this section, we calculate the factorization homology of $\Rep_q(\check T)$ over an oriented $2$-manifold $M$ (\emph{cf.}~Theorem \ref{thm-factorization-homology-quantum-torus}) and use it to prove a version of the quantum Betti geometric Langlands conjecture for tori (\emph{cf.}~Corollary \ref{cor-ben-zvi-nadler}).

\subsection{Calculation of $\int_M \Rep_q(\check T)$}

\begin{void}
Let $M$ be an oriented $2$-manifold. Let $\Lambda : \Sing M \rightarrow \integers\Mod$ taking values in finite free $\integers$-modules and fix a morphism in $\Fun(\Sing M, \Spc_*)$:
\begin{equation}
\label{eq-betti-level-factorization-homology}
q : \deloop^2\Lambda \rightarrow \deloop^4\complexes^{\times}.
\end{equation}

In this context, we have defined the $\mathbb E_M$-algebra $\Rep_q(\check T)$ in $\DGCat$ (\emph{cf.}~\S\ref{void-quantum-torus-construction}). The goal of this subsection is to compute the factorization homology of $\Rep_q(\check T)$ (\emph{cf.}~\S\ref{void-factorization-homology-construction}).
\end{void}

\begin{void}
\label{void-betti-level-global-section}
Applying the functor $\Gamma_c(M, \cdot)$ of compactly supported sections (\emph{cf.}~\S\ref{void-compactly-supported-sections}) to \eqref{eq-betti-level-factorization-homology}, we obtain a morphism in $\Spc$:
\begin{equation}
\label{eq-betti-level-global-section}
\Gamma_c(M, q) : \Gamma_c(M, \deloop^2\Lambda) \rightarrow \Gamma_c(M, \deloop^4\complexes^{\times}).
\end{equation}

Composing \eqref{eq-betti-level-global-section} with the global trace map $\tau_M^{\glob}$ (\emph{cf.}~\S\ref{void-global-trace-map}, for $A := \complexes^{\times}$ and $k := 4$), we obtain a morphism in $\Spc$:
\begin{equation}
\label{eq-global-gerbe}
\Gamma_c(M, \deloop^2\Lambda) \rightarrow \deloop^2\complexes^{\times}
\end{equation}

Denote by $\mathscr H_q^{\glob}$ the fiber of \eqref{eq-global-gerbe}. Thus $\mathscr H_q^{\glob}$ is an $\infty$-groupoid equipped with a $\deloop\complexes^{\times}$-action. In particular, $\LS(\mathscr H_q^{\glob})$ carries an action of $\LS(\deloop\complexes^{\times})$, using the symmetric monoidal structure on $\LS$ (\emph{cf.}~Lemma \ref{lem-local-system-kunneth-formula}). Viewing $\Vect$ as an $\LS(\deloop\complexes^{\times})$-module via the tautological character $\chi$ \eqref{eq-tautological-character-categorical}, we form
$$
\LS_q(\Gamma_c(M, \deloop^2\Lambda)) := \LS(\mathscr H_q^{\glob}) \otimes_{\LS(\deloop\complexes^{\times})} \Vect.
$$
\end{void}

\begin{thm}
\label{thm-factorization-homology-quantum-torus}
There is a canonical equivalence in $\DGCat$:
$$
\int_M \Rep_q(\check T) \simeq \LS_q(\Gamma_c(M, \deloop^2\Lambda)).
$$
\end{thm}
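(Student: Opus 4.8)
The plan is to feed the defining presentation \eqref{eq-quantum-torus-definition} of $\Rep_q(\check T)$ straight into factorization homology. Since $\DGCat$ is sifted-complete, Lemma \ref{lem-factorization-homology-tensor-distribution} lets me distribute $\int_M$ across the relative tensor product:
\[
\int_M \Rep_q(\check T) \simeq \int_M\LS(\mathscr H_q^{\loc}) \otimes_{\int_M\LS((\deloop\complexes^{\times})_M)} \int_M\Vect_M,
\]
compatibly with the module structures coming from the $(\deloop\complexes^{\times})_M$-action on $\mathscr H_q^{\loc}$ and from $\chi_M$. It then remains to identify the three factors and to match the result with $\LS_q(\Gamma_c(M,\deloop^2\Lambda))$.

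For the factors, the crucial point is that the functor $Y\mapsto\LS(Y)$ is symmetric monoidal (Lemma \ref{lem-local-system-kunneth-formula}) and preserves all colimits (Lemma \ref{lem-local-system-covariant-colimit-commutation}), hence preserves sifted colimits; so by the coefficient-functoriality of factorization homology (Remark \ref{rem-factorization-homology-symmetric-monoidal-functoriality}) one has $\int_M\LS(-) \simeq \LS(\int_M -)$ on $\Alg_{\mathbb E_M}(\Spc)$. Applying this to $\mathscr H_q^{\loc} = \Omega_M(\mathscr H_q)$ and invoking nonabelian Poincar\'e duality (Proposition \ref{prop-poincare-duality})---which applies because $\mathscr H_q$, being the fiber of \eqref{eq-torus-betti-level}, has homotopy sheaves concentrated in degrees $2$ and $3$ with values $\Lambda$ and $\complexes^{\times}$, hence is valued in $2$-connective groupoids---yields $\int_M\LS(\mathscr H_q^{\loc}) \simeq \LS(\Gamma_c(M,\mathscr H_q))$. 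Using the local trace identification $(\deloop\complexes^{\times})_M \simeq \Omega_M(\deloop^3\complexes^{\times})$, the same reasoning gives $\int_M\LS((\deloop\complexes^{\times})_M) \simeq \LS(\Gamma_c(M,\deloop^3\complexes^{\times}))$. Finally, by Lemma \ref{lem-factorization-homology-calg}, $\int_M\Vect_M \simeq \colim_{\Sing M}\Vect \simeq \Vect$, since $\Vect$ is the initial object of $\CAlg(\DGCat)$. Altogether, $\int_M\Rep_q(\check T) \simeq \LS(\Gamma_c(M,\mathscr H_q)) \otimes_{\LS(\Gamma_c(M,\deloop^3\complexes^{\times}))} \Vect$, with $\Vect$ the module induced by $\int_M\chi_M$.

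The remaining, and least formal, step is the comparison with the target. Applying the finite-limit-preserving functor $\Gamma_c(M,-)$ (cf.\ the proof of Lemma \ref{lem-functor-local-loop-finite-limits}) to the fiber sequence $\deloop^3\complexes^{\times} \to \mathscr H_q \to \deloop^2\Lambda$ presents $\Gamma_c(M,\mathscr H_q)$ as a gerbe over $\Gamma_c(M,\deloop^2\Lambda)$ banded by $\Gamma_c(M,\deloop^3\complexes^{\times})$, whereas $\mathscr H_q^{\glob}$ (by its construction in \S\ref{void-betti-level-global-section}) is banded by $\deloop\complexes^{\times}$; the global trace map compatibility \eqref{eq-global-trace-map-compatibility} for $A = \complexes^{\times}$ identifies $\mathscr H_q^{\glob}$ with the extension of $\Gamma_c(M,\mathscr H_q)$ along $\tau_M^{\glob}: \Gamma_c(M,\deloop^3\complexes^{\times}) \to \deloop\complexes^{\times}$. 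Since $\LS$ is colimit-preserving and symmetric monoidal, it carries this band-extension---a homotopy colimit over the classifying space of $\Gamma_c(M,\deloop^3\complexes^{\times})$---to a relative tensor product, so $\LS(\mathscr H_q^{\glob}) \simeq \LS(\Gamma_c(M,\mathscr H_q)) \otimes_{\LS(\Gamma_c(M,\deloop^3\complexes^{\times}))} \LS(\deloop\complexes^{\times})$, the module structure on $\LS(\deloop\complexes^{\times})$ being restriction along $(\tau_M^{\glob})_{\dagger}$. Substituting into $\LS_q(\Gamma_c(M,\deloop^2\Lambda)) = \LS(\mathscr H_q^{\glob}) \otimes_{\LS(\deloop\complexes^{\times})} \Vect$ and using associativity of the relative tensor product reduces the theorem to the identity $\int_M\chi_M \simeq \chi \circ (\tau_M^{\glob})_{\dagger}$ of functors $\LS(\Gamma_c(M,\deloop^3\complexes^{\times})) \to \Vect$; this unwinds, through \eqref{eq-global-trace-map-compatibility} and Lemma \ref{lem-factorization-homology-calg}, to the fact that integrating the constant family $\chi_M$ recovers $\chi$ precomposed with the global trace map.

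The first two paragraphs are essentially bookkeeping with the results of \S\ref{sec-preparation}; the real work, and the main obstacle, is the third: one has to transport the $\deloop^3\complexes^{\times}$- and $\deloop\complexes^{\times}$-actions coherently through nonabelian Poincar\'e duality, the local trace map and the global trace map, and verify that the (convolution-type) module structures on the two relative tensor products genuinely agree, so that the two presentations define the same DG category.
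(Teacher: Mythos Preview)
Your proposal is correct and follows essentially the same route as the paper: distribute $\int_M$ over the defining relative tensor product, push $\LS$ through factorization homology and apply nonabelian Poincar\'e duality to reach $\LS(\Gamma_c(M,\mathscr H_q)) \otimes_{\LS(\Gamma_c(M,\deloop^3\complexes^{\times}))} \Vect$, then use the band-extension identification of $\mathscr H_q^{\glob}$ together with the local--global trace compatibility \eqref{eq-global-trace-map-compatibility} to match the module structure on $\Vect$. The paper packages the last step as an explicit commutative diagram (your identity $\int_M\chi_M \simeq \chi \circ \LS(\tau_M^{\glob})$), but the content is the same.
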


\begin{proof}
Recall that $\Rep_q(\check T)$ is defined as the tensor product $\LS(\mathscr H_q^{\loc}) \otimes_{\LS((\deloop\complexes^{\times})_M)} \Vect_M$, where $\mathscr H_q^{\loc}$ is defined as $\Omega_M(\mathscr H_q)$ for $\mathscr H_q$ the fiber of \eqref{eq-betti-level-factorization-homology} (\emph{cf.}~\S\ref{void-quantum-torus-construction}). Let us rewrite it as the tensor product
\begin{equation}
\label{eq-quantum-torus-omega-expression}
\Rep_q(\check T) \simeq \LS(\Omega_M(\mathscr H_q)) \otimes_{\LS(\Omega_M(\deloop^3\complexes^{\times}))} \Vect_M,
\end{equation}
where $\Vect_M$ is viewed as a $\LS(\Omega_M(\deloop^3\complexes^{\times}))$-module via the composition
\begin{equation}
\label{eq-local-section-character}
\LS(\Omega_M(\deloop^3\complexes^{\times})) \simeq \LS((\deloop\complexes^{\times})_M) \xrightarrow{\chi_M} \Vect_M,
\end{equation}
where the isomorphism is $\LS(\tau_M^{\loc})$ for the local trace map $\tau_M^{\loc}$ (\emph{cf.}~\S\ref{void-local-trace-map}).

Using the presentation \eqref{eq-quantum-torus-omega-expression}, we compute:
\begin{align*}
	\int_M \Rep_q(\check T) & \simeq \int_M \LS(\Omega_M(\mathscr H_q)) \otimes_{\int_M\LS(\Omega_M(\deloop^3\complexes^{\times}))} \int_M\Vect_M & \text{(Lemma \ref{lem-factorization-homology-tensor-distribution})} \\
	& \simeq \LS(\int_M \Omega_M(\mathscr H_q)) \otimes_{\LS(\int_M \Omega_M(\deloop^3\complexes^{\times}))} \Vect & \text{(Remark \ref{rem-factorization-homology-symmetric-monoidal-functoriality})} \\
	& \simeq \LS(\Gamma_c(M, \mathscr H_q)) \otimes_{\LS(\Gamma_c(M, \deloop^3\complexes^{\times}))} \Vect & \text{(Proposition \ref{prop-poincare-duality})}
\end{align*}
Here, the identification $\int_M\Vect_M \simeq \Vect$ follows from the symmetric monoidal structure on $\int_M$ (\emph{cf.}~Remark \ref{rem-factorization-homology-symmetric-monoidal}).

\emph{Claim}: The $\LS(\Gamma_c(M, \deloop^3\complexes^{\times}))$-module structure on $\Vect$, appearing in the above expression, is induced from the morphism in $\CAlg(\DGCat)$:
\begin{equation}
\label{eq-global-section-character}
\LS(\Gamma_c(M, \deloop^3\complexes^{\times})) \rightarrow \LS(\deloop\complexes^{\times}) \xrightarrow{\chi} \Vect
\end{equation}
where the first morphism is $\LS(\tau_M^{\glob})$ for the global trace map $\tau_M^{\glob}$ (\emph{cf.}~\S\ref{void-global-trace-map}).

Indeed, assuming the claim, we obtain the desired isomorphism:
\begin{align*}
\int_M\Rep_q(\check T) & \simeq \LS(\Gamma_c(M, \mathscr H_q)) \otimes_{\LS(\Gamma_c(M, \deloop^3\complexes^{\times}))} \LS(\deloop\complexes^{\times}) \otimes_{\LS(\deloop\complexes^{\times})} \Vect \\
& \simeq \LS(\mathscr H_q^{\glob}) \otimes_{\LS(\deloop\complexes^{\times})} \Vect \\
& \simeq \LS_q(\Gamma_c(M, \deloop^2\Lambda)),
\end{align*}
using the fact that $\mathscr H_q^{\glob}$ is the quotient of $\Gamma_c(M, \mathscr H_q) \times \deloop\complexes^{\times}$ by the anti-diagonal action of $\Gamma_c(M, \deloop^3\complexes^{\times})$ and that $\LS$ commutes with colimits (\emph{cf.}~Lemma \ref{lem-local-system-covariant-colimit-commutation}).

To prove the claim, it suffices to identify \eqref{eq-global-section-character} with the factorization homology of \eqref{eq-local-section-character} under nonabelian Poincar\'e duality (\emph{cf.}~Proposition \ref{prop-poincare-duality}). This amounts to the solid commutative diagram in $\CAlg(\DGCat)$ below:
\begin{equation}
\label{eq-local-global-section-character}
\begin{tikzcd}[column sep = 3em]
	\int_M\LS(\Omega_M(\deloop^2\complexes^{\times})) \ar[r, "\int_M\LS(\tau_M^{\loc})"]\ar[d, "\simeq"] & \int_M \LS((\deloop\complexes^{\times})_M) \ar[d, dotted] \ar[r, "\int_M \chi_M"] & \int_M \Vect_M \ar[d, "\simeq"] \\
	\LS(\Gamma_c(M, \deloop^3\complexes^{\times})) \ar[r, "\LS(\tau_M^{\glob})"] & \LS(\deloop\complexes^{\times}) \ar[r, "\chi"] & \Vect
\end{tikzcd}
\end{equation}

We shall supply the dotted arrow in \eqref{eq-local-global-section-character} making both squares commute. Indeed, we let it be the composite
\begin{equation}
\label{eq-local-global-section-character-dotted-arrow}
\int_M \LS(\deloop\complexes^{\times})_M \simeq \colim \LS(\deloop\complexes^{\times}) \rightarrow \LS(\deloop\complexes^{\times}),
\end{equation}
where the isomorphism is \eqref{eq-factorization-homology-calg} (applied to $\mathscr A := \LS(\deloop\complexes^{\times})$) and the second map is induced from the identity on $\LS(\deloop\complexes^{\times})$, as it determines a constant functor out of $\Sing M$.

The right square of \eqref{eq-local-global-section-character} commutes by naturality of the construction of \eqref{eq-local-global-section-character-dotted-arrow} with respect to $\chi$. The left square of \eqref{eq-local-global-section-character-dotted-arrow} commutes because it is the image of \eqref{eq-global-trace-map-compatibility} under $\LS$.
\end{proof}

\subsection{The Ben-Zvi--Nadler conjecture for $T$}

\begin{void}
Let $X$ be a smooth $\complexes$-curve, assumed projective and connected. Let $T$ be an $X$-torus and $q$ be a Betti level for $T$ in the sense of \S\ref{void-level-general}.

Our goal is to interpret Theorem \ref{thm-factorization-homology-quantum-torus} as a version of a conjecture of Ben-Zvi and Nadler (\emph{cf.}~\cite[Conjecture 4.27]{MR3821166}) for $T$.

To do so, we invoke the passage from the algebraic to the topological context (\emph{cf.}~\S\ref{void-topological-context}): We regard $\Lambda$ as a functor $\Sing X^{\topology} \rightarrow \integers\Mod$ and $q$ as a morphism $\deloop^2\Lambda \rightarrow \deloop^4\complexes^{\times}$ in the $\infty$-category $\Fun(\Sing X^{\topology}, \Spc_*)$. To ease the notation, we shall denote factorization homology over $X^{\topology}$ by $\int_X$.
\end{void}

\begin{void}
We also need to extend the construction of the underlying homotopy type of a $\complexes$-scheme of finite type (\emph{cf.}~\S\ref{void-scheme-to-homotopy-type}) to $\complexes$-prestacks.

Indeed, we write $\PStk(\Sch_{\ft})$ for the $\infty$-category of functors $(\Sch_{\ft})^{\opposite} \rightarrow \Spc$ and consider the left Kan extension of \eqref{eq-scheme-to-homotopy-type} along the Yoneda embedding. This yields a functor
\begin{equation}
\label{eq-prestack-to-homotopy-type}
\PStk(\Sch_{\ft}) \rightarrow \Spc,
\end{equation}
which we will still denote by $\Sing(\cdot)^{\topology}$.
\end{void}

\begin{rem}
\label{rem-stack-homotopy-type-descent}
Let $\mathscr Y$ be an algebraic stack with a smooth cover $f : Z \rightarrow \mathscr Y$ with $Z \in \Sch_{\ft}$ such that for any $S \in \Sch_{\ft}$ over $\mathscr Y$ the base change $f_S : Z\times_{\mathscr Y}S \rightarrow S$ induces a Serre fibration $(Z\times_{\mathscr Y} S)^{\topology} \rightarrow S^{\topology}$, then we have a canonical isomorphism in $\Spc$:
\begin{equation}
\label{eq-stack-homotopy-type-descent}
\colim \Sing (Z^{\bullet}_{/\mathscr Y})^{\topology} \simeq \Sing \mathscr Y^{\topology},
\end{equation}
where $Z^{\bullet}_{/\mathscr Y}$ denotes the \v{C}ech nerve of $f$.

To see this, we note that \eqref{eq-prestack-to-homotopy-type} commutes with colimits, so it suffices to prove that given a morphism $Z \rightarrow Y$ in $\Sch_{\ft}$ inducing a Serre fibration $Z^{\topology} \rightarrow Y^{\topology}$, its \v{C}ech nerve induces an isomorphism
$$
\colim \Sing (Z^{\bullet}_{/Y})^{\topology} \simeq \Sing Y^{\topology}.
$$
By Remark \ref{rem-scheme-to-homotopy-type-fiber-product}, $\Sing(Z^{\bullet}_{/Y})^{\topology}$ is identified with the \v{C}ech nerve of the effective epimorphism $\Sing Z^{\topology} \rightarrow \Sing Y^{\topology}$, so this follows from \cite[Corollary 6.2.3.5]{MR2522659}.

Notably, for the algebraic stack $\deloop T$ with the smooth cover $X\rightarrow \deloop T$, \eqref{eq-stack-homotopy-type-descent} yields an isomorphism in $\Spc$:
\begin{equation}
\label{eq-quotient-torus-commutes-with-homotopy-type}
\deloop \Sing T^{\topology} \simeq \Sing (\deloop T)^{\topology}.
\end{equation}
\end{rem}

\begin{void}
Denote by $\Bun_T$ the moduli stack of $T$-bundles over $X$, whose $S$-points (for $S\in\Sch_{\ft}$) are $T$-bundles over $X\times S$. Let us construct a morphism in $\Spc$:
\begin{equation}
\label{eq-bunt-homotopy-type}
	\Sing(\Bun_T)^{\topology} \rightarrow \Gamma(\Sing X^{\topology}, \deloop^2\Lambda).
\end{equation}

Indeed, applying $\Sing(\cdot)^{\topology}$ to the universal $T$-bundle
$$
X \times \Bun_T \rightarrow \deloop T
$$
and using its commutation with finite products (\emph{cf.}~Remark \ref{rem-scheme-to-homotopy-type-fiber-product}), we obtain a morphism:
\begin{align}
\notag
\Sing X^{\topology} \times \Sing(\Bun_T)^{\topology} &\rightarrow \Sing(\deloop T)^{\topology} \\
\label{eq-universal-bundle-homotopy-type}
& \simeq \deloop\Sing T^{\topology} \simeq \deloop^2 \Lambda,
\end{align}
where the two isomorphism are \eqref{eq-quotient-torus-commutes-with-homotopy-type}, respectively the one of \S\ref{void-exponential-exact-sequence}. The desired map \eqref{eq-bunt-homotopy-type} now follows from adjunction.
\end{void}

\begin{lem}
The morphism \eqref{eq-bunt-homotopy-type} is an isomorphism.
\end{lem}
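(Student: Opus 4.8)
The plan is to present $\Bun_T$ as a mapping stack and then to strip off its holomorphic information by means of the exponential sequence, the essential input being that affine space is contractible. A $T$-torsor on $X\times S$ is the same datum as a morphism $X\times S\to\deloop T$ over $X$, so $\Bun_T\simeq\underline{\Maps}_{/X}(X\times(-),\deloop T)$, and \eqref{eq-bunt-homotopy-type} is exactly what one obtains by applying $\Sing(\cdot)^{\topology}$ to the universal such morphism and adjoining, using the identification $\Sing(\deloop T)^{\topology}\simeq\deloop^2\Lambda$ of \eqref{eq-quotient-torus-commutes-with-homotopy-type} and \S\ref{void-exponential-exact-sequence}. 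Since $X$ is projective, GAGA for torsors under the torus $T$ — a consequence of GAGA for coherent sheaves — identifies $\Bun_T$, in families, with the moduli stack of analytic $T$-torsors on $X^{\topology}$, so that $\Sing(\Bun_T)^{\topology}$ is the homotopy type of the latter; I would work with this analytic model henceforth.

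I would then feed in the holomorphic exponential sequence $0\to\Lambda\to\Lie(T)\to T\to 0$ of sheaves of abelian groups on $X^{\topology}$ (\emph{cf.}~\eqref{eq-exponential-exact-sequence}), with $\Lie(T)$ the sheaf of holomorphic sections of the Lie algebra bundle. Delooping twice and forming relative classifying stacks, it exhibits the moduli stack of analytic $T$-torsors as the fiber, over the zero section, of a morphism
$$
\Gamma(X^{\topology},\deloop^2\Lambda)\longrightarrow\Gamma(X^{\topology},\deloop^2\Lie(T))
$$
(the functor $\Gamma(X^{\topology},-)$, being a right adjoint, preserves this limit). The target is inert after passage to homotopy types: $\Lie(T)$ is a vector bundle on the curve $X$, so $R\Gamma(X,\Lie(T))$ is a bounded complex of finite-dimensional $\complexes$-vector spaces, whence $\Gamma(X^{\topology},\deloop^2\Lie(T))$ is an iterated extension of objects of the form $\deloop^{j}\mathbb G_a$; since $\mathbb A^1$ is contractible each such factor, and hence the target, has a point for its homotopy type. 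The source, by contrast, is the space of sections of a locally constant sheaf of spaces, $\Lambda$ being locally constant, so its homotopy type is $\Gamma(\Sing X^{\topology},\deloop^2\Lambda)$ by definition.

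It remains to verify that passage to homotopy types carries this fiber sequence over into a fiber sequence; granting that, $\Sing(\Bun_T)^{\topology}$ becomes the fiber of a map to a point, namely $\Gamma(\Sing X^{\topology},\deloop^2\Lambda)$, and one checks by unwinding — the universal algebraic torsor going to the universal analytic one under GAGA, and then to the tautological section under the two presentations of $\deloop T$ — that the resulting equivalence is \eqref{eq-bunt-homotopy-type}. The morphism $\deloop^2\Lambda\to\deloop^2\Lie(T)$ may be taken to be a Serre fibration on underlying spaces, with base of contractible homotopy type, so the needed compatibility is an avatar of Remark \ref{rem-scheme-to-homotopy-type-fiber-product}; proving it here, for the transcendental exponential fibration rather than for an algebraic map — which is where one wants a careful extension of $\Sing(\cdot)^{\topology}$ to analytic stacks in the spirit of Remark \ref{rem-stack-homotopy-type-descent} — is the main obstacle. (As a sanity check, for $T=\mathbb G_m$ the assertion reduces to the classical identification $\Sing(\Pic X)^{\topology}\simeq\Maps(X^{\topology},\deloop^2\integers)$.)
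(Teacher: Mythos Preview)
Your approach differs substantially from the paper's. The paper gives a brief, direct argument: it reduces to checking that \eqref{eq-bunt-homotopy-type} induces an isomorphism on homotopy groups, identifies $\pi_0\Sing(\Bun_T)^{\topology}$ with $H^2(X^{\topology},\Lambda)$ via the first Chern class, and then invokes Atiyah--Bott uniformization to identify $\pi_1$ and $\pi_2$ of the neutral component $\Bun_T^0$ with $H^1(X^{\topology},\Lambda)$ and $H^0(X^{\topology},\Lambda)$ respectively. No fiber sequences, no GAGA, no extension of $\Sing(\cdot)^{\topology}$ to analytic stacks.

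Your route---realizing $\Bun_T$ as the fiber in a sequence coming from the exponential short exact sequence, then arguing that the $\Lie(T)$-term has contractible homotopy type---is more structural, and in spirit it \emph{is} what underlies the Atiyah--Bott computation the paper cites. But the gap you flag is genuine: to push your fiber sequence of analytic stacks through $\Sing(\cdot)^{\topology}$ you need the transcendental exponential to behave like a Serre fibration in a sense to which Remark~\ref{rem-scheme-to-homotopy-type-fiber-product} does not literally apply, and setting this up carefully (extending the functor to analytic stacks as you suggest) is more infrastructure than the lemma warrants. The paper's direct computation sidesteps the issue entirely: both sides of \eqref{eq-bunt-homotopy-type} are visibly $2$-truncated, so matching $\pi_0$, $\pi_1$, $\pi_2$ suffices, and those three identifications are classical.
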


\begin{proof}
It suffices to prove that \eqref{eq-bunt-homotopy-type} induces an isomorphism on homotopy groups.

The connected components of $\Sing(\Bun_T)^{\topology}$ are classified by the first Chern class of a $T$-bundle, thus in natural bijection with $H^2(X^{\topology}, \Lambda)$.

Denote by $\Bun_T^0 \subset \Bun_T$ the substack of $T$-bundles with vanishing first Chern class. Then the homotopy groups of $\Sing(\Bun_T^0)^{\topology}$ are computed by Atiyah--Bott uniformization, which yields $H^1(X^{\topology}, \Lambda)$, respectively $H^0(X^{\topology}, \Lambda)$ in degrees $1$ and $2$.
\end{proof}

\begin{void}
By post-composing \eqref{eq-universal-bundle-homotopy-type} with the Betti level $q$, we obtain a map $\Sing(\Bun_T)^{\topology} \rightarrow \Gamma(\Sing X^{\topology}, \deloop^4\complexes^{\times})$ by adjunction.

Post-composing with the global trace map $\tau^{\glob}_M$ (\emph{cf.}~\S\ref{void-global-trace-map}, for $M := X^{\topology}$, $A := \complexes^{\times}$ and $k := 4$), we obtain a map in $\Spc$:
\begin{equation}
\label{eq-bunt-betti-gerbe}
\Sing(\Bun_T)^{\topology} \rightarrow \deloop^2\complexes^{\times},
\end{equation}
which can be viewed as a Betti $\complexes^{\times}$-gerbe over $\Bun_T$.

By construction, \eqref{eq-global-gerbe} and \eqref{eq-bunt-betti-gerbe} are intertwined by the isomorphism \eqref{eq-bunt-homotopy-type}:
\begin{equation}
\label{eq-bunt-gerbe-compatibility}
\begin{tikzcd}[column sep = 1.5em]
	\Sing(\Bun_T)^{\topology} \ar[r, "\eqref{eq-bunt-betti-gerbe}"]\ar[d, "\simeq"] & \deloop^2\complexes^{\times} \ar[d, "\simeq"] \\
	\Gamma(\Sing X^{\topology}, \deloop^2\Lambda) \ar[r, "\eqref{eq-global-gerbe}"] & \deloop^2\complexes^{\times}
\end{tikzcd}
\end{equation}
\end{void}

\begin{void}
The authors of \cite{MR3821166} propose to study the $\infty$-category $\Shv_{\mathscr N, q}(\Bun_T)$ of sheaves of $\complexes$-vector space on $\Bun_T$ twisted by the $\complexes^{\times}$-gerbe \eqref{eq-bunt-betti-gerbe}, satisfying the condition of having \emph{nilpotent} singular support.

Since the global nilpotent cone of $\Bun_T$ is the zero section, this condition is equivalent to having locally constant cohomology sheaves. By Remark \ref{rem-local-systems-as-sheaves}, we may identify
$$
\Shv_{\mathscr N, q}(\Bun_T) \simeq \LS_q(\Bun_T),
$$
where $\LS_q(\Bun_T)$ denotes the $\infty$-category of local systems over $\Sing(\Bun_T)^{\topology}$ twisted by the $\complexes^{\times}$-gerbe \eqref{eq-bunt-betti-gerbe}. In other words, we have an equivalence
\begin{equation}
\label{eq-bunt-twisted-local-systems}
\Shv_{\mathscr N, q}(\Bun_T) \simeq \LS(\mathscr G_q^{\glob}) \otimes_{\LS(\deloop\complexes^{\times})} \Vect,
\end{equation}
where $\mathscr G_q^{\glob}$ denotes the fiber of \eqref{eq-bunt-betti-gerbe}, endowed with the natural $\deloop\complexes^{\times}$-action, and $\Vect$ is viewed as an $\LS(\deloop\complexes^{\times})$-module via the tautological character $\chi$.

The following is a version of \cite[Conjecture 4.27]{MR3821166} for $X$-tori.
\end{void}

\begin{cor}
\label{cor-ben-zvi-nadler}
There is a canonical equivalence in $\DGCat$:
$$
\int_X \Rep_q(\check T) \simeq \Shv_{\mathscr N, q}(\Bun_T).
$$
\end{cor}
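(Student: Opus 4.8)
The plan is to deduce the corollary directly from Theorem \ref{thm-factorization-homology-quantum-torus} by specializing $M := X^{\topology}$ and identifying the resulting right-hand side $\LS_q(\Gamma_c(X^{\topology}, \deloop^2\Lambda))$ with $\Shv_{\mathscr N, q}(\Bun_T)$ through \eqref{eq-bunt-twisted-local-systems}. Since $X$ is smooth, projective, and connected, $X^{\topology}$ is a compact connected $2$-manifold, canonically oriented by its complex structure and admitting a finite good cover; thus it lies in $\Mfd_2$ and the hypotheses of Theorem \ref{thm-factorization-homology-quantum-torus} are satisfied, giving $\int_X \Rep_q(\check T) \simeq \LS(\mathscr H_q^{\glob}) \otimes_{\LS(\deloop\complexes^{\times})} \Vect$ with $\mathscr H_q^{\glob}$ the fiber of \eqref{eq-global-gerbe}.

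First I would record that, because $X^{\topology}$ is compact, the poset of compact subsets of $X^{\topology}$ indexing the colimit \eqref{eq-compactly-supported-sections} has a terminal object $K = X^{\topology}$, for which $\Gamma(\Sing X^{\topology}\setminus K, \mathscr X) \simeq *$; hence the canonical map $\Gamma(\Sing X^{\topology}, \mathscr X) \to \Gamma_c(X^{\topology}, \mathscr X)$ is an equivalence for every $\mathscr X : \Sing X^{\topology} \to \Spc_*$. Applying this to $\mathscr X = \deloop^2\Lambda$ and composing with the lemma above (that \eqref{eq-bunt-homotopy-type} is an isomorphism), I would obtain a canonical equivalence $\Gamma_c(X^{\topology}, \deloop^2\Lambda) \simeq \Sing(\Bun_T)^{\topology}$.

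Next I would check that this equivalence carries the map \eqref{eq-global-gerbe} defining $\mathscr H_q^{\glob}$ to the map \eqref{eq-bunt-betti-gerbe} defining $\mathscr G_q^{\glob}$: both arise from the universal $T$-bundle on $X \times \Bun_T$ by passing to homotopy types, post-composing with $q$, and then post-composing with the global trace map $\tau^{\glob}_{X^{\topology}}$ (for $A := \complexes^{\times}$, $k := 4$, $n := 2$), and their compatibility is exactly the commutative square \eqref{eq-bunt-gerbe-compatibility}. Passing to fibers then yields $\mathscr H_q^{\glob} \simeq \mathscr G_q^{\glob}$ compatibly with the $\deloop\complexes^{\times}$-actions, both of which are induced by the $\deloop\complexes^{\times}$-action on the common target $\deloop^2\complexes^{\times}$. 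Feeding this into the definition of $\LS_q(\Gamma_c(X^{\topology}, \deloop^2\Lambda))$ in \S\ref{void-betti-level-global-section} and into \eqref{eq-bunt-twisted-local-systems} --- with $\Vect$ viewed throughout as an $\LS(\deloop\complexes^{\times})$-module via the tautological character $\chi$ --- finishes the identification $\int_X \Rep_q(\check T) \simeq \Shv_{\mathscr N, q}(\Bun_T)$.

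I do not expect a serious obstacle here: the two substantive inputs --- the factorization homology computation of Theorem \ref{thm-factorization-homology-quantum-torus} and the identification of $\Gamma(\Sing X^{\topology}, \deloop^2\Lambda)$ with $\Sing(\Bun_T)^{\topology}$ --- are already in hand, and what remains is bookkeeping transporting the $\deloop\complexes^{\times}$-equivariant gerbe structure through \eqref{eq-bunt-gerbe-compatibility}. The one place that warrants a little care is the reduction ``$\Gamma_c = \Gamma$ on a compact manifold'', which should be justified at the level of the $\infty$-categorical colimit \eqref{eq-compactly-supported-sections} rather than merely for ordinary cohomology.
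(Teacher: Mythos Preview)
Your proposal is correct and follows essentially the same route as the paper: reduce via \eqref{eq-bunt-twisted-local-systems}, use the commutative square \eqref{eq-bunt-gerbe-compatibility} to match $\mathscr G_q^{\glob}$ with $\mathscr H_q^{\glob}$, and invoke Theorem \ref{thm-factorization-homology-quantum-torus}. The only difference is cosmetic: you make the identification $\Gamma_c(X^{\topology},\cdot)\simeq\Gamma(\Sing X^{\topology},\cdot)$ explicit via the terminal object of the indexing poset in \eqref{eq-compactly-supported-sections}, whereas the paper passes silently between the two (writing $\LS_q(\Gamma(\Sing X^{\topology},\deloop^2\Lambda))$ in the proof while the definition in \S\ref{void-betti-level-global-section} uses $\Gamma_c$).
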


\begin{proof}
Using \eqref{eq-bunt-twisted-local-systems}, we reduce to constructing an equivalence
\begin{equation}
\label{eq-ben-zvi-nadler-tensor}
\int_X \Rep_q(\check T) \simeq \LS(\mathscr G_q^{\glob}) \otimes_{\LS(\deloop\complexes^{\times})} \Vect.
\end{equation}

In view of \eqref{eq-bunt-gerbe-compatibility}, we may replace $\mathscr G_q^{\glob}$ by the fiber $\mathscr H_q^{\glob}$ of \eqref{eq-global-gerbe}. Thus, the right-hand-side of \eqref{eq-ben-zvi-nadler-tensor} is canonically equivalent to $\LS(\mathscr H_q^{\glob}) \otimes_{\LS(\deloop\complexes^{\times})} \Vect$, which is by definition the DG category $\LS_q(\Gamma(\Sing X^{\topology}, \deloop^2\Lambda))$ (\emph{cf.}~\S\ref{void-betti-level-global-section}).

The desired equivalence \eqref{eq-ben-zvi-nadler-tensor} is now provided by Theorem \ref{thm-factorization-homology-quantum-torus}.
\end{proof}

\bibliographystyle{amsalpha}
\bibliography{bibliography.bib}

\end{document}